\documentclass[10pt]{amsart}

\usepackage{amsmath, amssymb, amsbsy}
\usepackage{array}
\usepackage{graphpap, color, paralist,xcolor}
\usepackage[mathscr]{eucal}
\usepackage[pdftex]{graphicx}
\usepackage[pdftex,colorlinks,backref=page,citecolor=blue]{hyperref}
\usepackage{pifont}
\usepackage{tikz}
\tikzset{sdot/.style = {fill, circle, inner sep = 1.25pt}}
\usepackage{mathtools}
\usepackage{cleveref}
\usepackage{float}
\usepackage{crossreftools}

\newcounter{bullet}

\usepackage{setspace}
\setdisplayskipstretch{1}
\newenvironment{subproof}[1][\proofname]{
  
  \begin{proof}[#1]
}{
  \end{proof}
}
\usepackage{geometry}
\usepackage{comment}
\geometry{margin=1in}
\linespread{1.2}
\setlength{\parskip}{3pt}

\raggedbottom
\newtheorem{thm}{Theorem}[section]
\newtheorem{prop}[thm]{Proposition}
\newtheorem{cor}[thm]{Corollary}
\newtheorem{lem}[thm]{Lemma}

\theoremstyle{definition}

\newtheorem{claim}[thm]{Claim}
\newtheorem{example}[thm]{Example}

\newtheorem{remark}[thm]{Remark}

\crefname{lem}{lemma}{lemmas}
\newcommand{\gO}{\Omega}

\newcommand{\Z}{\mathbb{Z}}

\newcommand{\cC}{\mathcal{C} }

\newcommand{\cG}{\mathcal{G} }

\newcommand{\bff}{\mathbf{f}}

\newcommand{\beq}[1]{\begin{equation}\label{#1}}
\newcommand{\enq}[0]{\end{equation}}
\newcommand{\eps}{\epsilon}

\newcommand{\gl}[0]{\lambda}

\newcommand{\mn}[0]{\medskip\noindent}
\newcommand{\nin}[0]{\noindent}

\newcommand{\sub}[0]{\subseteq}

\newcommand{\ra}[0]{\rightarrow}

\newcommand{\pr}[0]{\mathbb{P}}

\newcommand{\Hom}[0]{\mbox{\rm{Hom}}}
\newcommand{\Lip}[0]{\mbox{\rm{Lip}}}
\newcommand{\dist}[0]{\mbox{\rm{dist}}}



\begin{document}

\title{Random Lipschitz functions on graphs with weak expansion}

\author[S. I\c{s}\i k]{Senem I\c{s}\i k}
\address{Department of Mathematics, Stanford University}
\email{senemi@stanford.edu}

\author[J. Park]{Jinyoung Park}
\address{Department of Mathematics, Courant Institute of Mathematical Sciences, New York University}
\email{jinyoungpark@nyu.edu}

\begin{abstract}
Benjamini, Yadin, and Yehudayoff (2007) showed that if the maximum degree of a graph $G$ is 'sub-logarithmic,' then the typical range of random $\mathbb Z$-homomorphisms is super-constant. Furthermore, they showed that there is a sharp transition on the range of random $\Z$-homomorphisms on the graph $C_{n,k}$, the tensor product of the $n$-cycle and the complete graph on $k$ vertices with self-loops, around $k=2\log n$. We extend (to some extent) their results to random $M$-Lipschitz functions and random real-valued Lipschitz functions.
\end{abstract}

\maketitle

\section{Introduction}\label{sec.intro}

The present work is most closely motivated by the works of Benjamini, Yadin, and Yehudayoff \cite{benjamini2007random} and Peled, Samotij, and Yehudayoff \cite{peled2013lipschitz}  which study the range of random Lipschitz functions on graphs. Throughout the paper, $G=(V,E)$ is a connected finite graph. An \textit{$M$-Lipschitz function} on $G$ is an integer-valued function $f:V(G) \ra \Z$ such that $|f(u)-f(v)|\le M$ for all $\{u,v\} \in E(G)$, and an \textit{$\mathbb R$-valued Lipschitz function} on $G$ is a function $f:V(G) \ra \mathbb R$ such that $|f(u)-f(v)| \le 1$ for all $\{u,v\} \in E(G)$. The study of random Lipschitz functions on graphs can be seen as a special case of the study of random surfaces in statistical physics; we refer the readers to \cite[Section 2]{peled2013grounded} for more details. Another important class of Lipschitz functions in our context is a \textit{$\Z$-homomorphism} on $G$, a function $f:V(G)\ra \mathbb Z$ such that $|f(u)-f(v)|=1$ for all $\{u,v\} \in E(G)$. There is a rich history of study of random $\Z$-homomorphisms on various graphs, see e.g. \cite{benjamini2000random, benjamini1994tree}.

The \textit{range} of a function $f: V(G) \ra \mathbb R$ is defined to be
\[R(f):=\max_{v \in V(G)}f(v)-\min_{v\in V(G)} f(v)+1.\]
For an arbitrarily chosen $v_0 \in V(G),$ write
\beq{def.M-Lip} \Lip_{v_0}(G;M)=\{f:V(G) \ra \Z \text{ such that $f(v_0)=0$ and $|f(u)-f(v)|\le M$ $ \forall \{u,v\} \in E(G)$}\}\enq
and
\beq{def.Lip} \Lip_{v_0}(G;\infty)=\{f:V(G) \ra \mathbb R \text{ such that $f(v_0)=0$ and $|f(u)-f(v)|\le 1$ $ \forall \{u,v\} \in E(G)$}\},\enq
noting that both $\Lip_{v_0}(G;M)$ and $\Lip_{v_0}(G;\infty)$ are finite.

The main focus of this paper is the distribution of $R(f)$ for a uniformly randomly chosen $f$ from the families in \eqref{def.M-Lip} or \eqref{def.Lip}. Denote by $x \in_R X$ a uniformly random element $x$ of $X$. Observe that the choice of $v_0$ in \eqref{def.M-Lip} or \eqref{def.Lip} doesn't affect the distribution of $R(f)$ for $f \in_R \Lip_{v_0}(G;M)$ or $f \in_R \Lip_{v_0}(G;\infty)$, because for any $v_0, v_0' \in V(G)$, we can define a natural bijection between $\Lip_{v_0}(G;M)$ and $\Lip_{v_0'}(G;M)$ using translations (and similarly for $\Lip_{v_0}(G;\infty)$), which preserves the range.

Our first main result shows that, if the host graph $G$ exhibits a "small expansion," a random Lipschitz function typically exhibits a "large" fluctuation. In the statement below, $B_r(v) ~(\sub V(G))$ is the \textit{ball of radius $r$ centered at $v$}. We use $\log$ for $\log_2$, and for the results in this paper, we don't try to optimize constant factors.

\begin{thm}\label{MT_M-Lips}
There is a constant $c>0$ for which the following holds.    Let $G$ be a connected graph on $n$ vertices. Fix $v_0 \in V(G)$, and let $f$ be chosen uniformly at random from $\Lip_{v_0}(G;M)$. Then for any $r$ that satisfies $\max\{|B_{r-1}(v)|:v \in V(G)\} \le c\log n$,
    \beq{MT_conclusion} \pr\left[R(f)< \frac{Mr}{2}\right] =o_n(1).\enq
\end{thm}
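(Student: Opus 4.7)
\medskip
\noindent\textbf{Proof plan.} I would use an injection (``shifting'') argument adapted from the approach of Benjamini--Yadin--Yehudayoff~\cite{benjamini2007random}, originally developed there for $\Z$-homomorphisms. Let $F := \{f \in \Lip_{v_0}(G;M) : R(f) < Mr/2\}$. The goal is to construct, for each $f \in F$, a family of distinct $M$-Lipschitz functions of range at least $Mr/2$ in an (almost-)injective manner, so that $|F|/|\Lip_{v_0}(G;M)| = o_n(1)$.

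For each $f \in F$ fix a canonical maximum vertex $w = w(f)$ (e.g.\ the lex-first maximizer). The hypothesis $|B_{r-1}(w)| \le c\log n$ ensures that at least $n - c\log n$ vertices $u^{*}$ lie outside $B_{r-1}(w)$, so each such $u^{*}$ satisfies $d_G(u^{*}, w) \ge r$. For each such $u^{*}$ I define the ``cone-dig''
\[
\Phi_{u^{*}}(f)(v) := \min\bigl\{\,f(v),\; f(u^{*}) - \lceil Mr/2 \rceil + M \cdot d_G(v, u^{*})\,\bigr\},
\]
followed by the translation needed to restore $f(v_0) = 0$ (which preserves range). Being the pointwise minimum of two $M$-Lipschitz functions (the cone is $M$-Lipschitz by the triangle inequality), $\Phi_{u^{*}}(f)$ is $M$-Lipschitz. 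Using $d_G(u^{*},w) \ge r$ together with $R(f) < Mr/2$ and the Lipschitz bound $f(u^{*}) \ge f(w) - M\,d_G(u^{*},w)$, one checks that the cone's value at $w$ is $\ge f(w)$, so the peak is preserved; meanwhile $\Phi_{u^{*}}(f)(u^{*}) = f(u^{*}) - \lceil Mr/2 \rceil$ becomes the unique minimum since $R(f) < Mr/2$ forces $f(v) > f(u^{*}) - Mr/2$ for every $v \neq u^{*}$. Hence $R(\Phi_{u^{*}}(f)) > Mr/2$.

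For (almost-)injectivity of $(f, u^{*}) \mapsto g := \Phi_{u^{*}}(f)$: given $g$, recover $u^{*}$ as the unique minimizer, identify the cone support $S := \{v : g(v) = g(u^{*}) + M\cdot d_G(v, u^{*})\}$, and observe that $f = g$ on $V \setminus S$. The bound $R(f) < Mr/2$ forces $S \subseteq B_{r-1}(u^{*})$, hence $|S| \le c\log n$, and on $S$ the reconstruction of $f$ is determined only up to an $M$-Lipschitz extension of $g|_{\partial S}$ dominating the cone, giving at most $(2M+1)^{|S|} \le n^{c\log(2M+1)}$ choices. Combining with the $\Omega(n)$ choices of $u^{*}$ yields
\[
|F| \cdot \Omega(n) \;\le\; n^{c\log(2M+1)} \cdot |\Lip_{v_0}(G;M)|,
\]
so $|F|/|\Lip_{v_0}(G;M)| = O\!\bigl(n^{c\log(2M+1)-1}\bigr) = o_n(1)$ provided $c < 1/\log(2M+1)$.

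The main obstacle is the quantitative bookkeeping: the ``lost data'' overhead $(2M+1)^{|S|}$ incurred by overwriting $f$ on the cone support must be strictly dominated by the $\Omega(n)$ multiplicity gained from the choice of $u^{*}$, which is what constrains how small $c$ must be relative to $M$. A secondary technical point is the $v_0$-normalization, handled by the range-preserving translation mentioned above; obtaining a $c$ independent of $M$ (if that is the intended reading) would likely require a refined modification, iteration of the dig, or a level-set variant that leaks less information per step.
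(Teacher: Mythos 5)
Your cone-digging injection is essentially the Benjamini--Yadin--Yehudayoff shifting argument, and your own accounting at the end exposes exactly where it falls short of the theorem: you need $c < 1/\log(2M+1)$, i.e.\ a constant that degenerates as $M$ grows. The theorem asserts a single absolute constant $c$ (the paper takes $c$ close to $1/2$) with an $o_n(1)$ bound uniform in $M$; this uniformity is not a side issue but the main point of the result --- it is what lets \Cref{MT_Lips} (the $\mathbb R$-valued case) be deduced by letting $M \to \infty$, and the paper states explicitly that a straightforward application of the BYY method only handles bounded $M$ (cf.\ \Cref{rmk}). The structural reason your trade fails is that each dig gains only the factor $\Omega(n)$ from the choice of $u^{*}$ while paying $(2M+1)^{|S|}$ for the overwritten data; to win uniformly in $M$ one must gain a factor of order $M^{|S|}$ per low-range function, i.e.\ produce roughly $(\Omega(M))^{|S|}$ distinct high-range functions from each one, not a single function per site $u^*$.

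That is what the paper's proof does. It fixes $\sim n/m^2$ pairwise disjoint balls of exact radius $r$ (\Cref{many balls}), conditions on the function outside their interiors, and shows (\Cref{lower prob}) that for an \emph{arbitrary} boundary condition the proportion of fillings of a ball whose maximum exceeds the boundary minimum by $Mr/2$ is at least $C^{-m}$ with $C$ absolute: the fillings are partitioned into equivalence classes indexed by the set $A$ of already-high vertices and a set $Q^*$ of their problematic neighbors, the values on $A \cup Q^* \cup \Gamma$ are frozen, and on each remaining vertex of layer $L_i$ one is free to choose any value in $[Mi/2,\, M(i+1)/2 - 1]$, yielding $((M-1)/2)^{|B|-k}$ high-range fillings against the trivial class-size bound $(2M+1)^{|B|-k}$, so the powers of $M$ cancel. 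Independence across the disjoint balls then gives $(1 - C^{-m})^{\lfloor n/m^2\rfloor - 1} = o_n(1)$. To repair your argument you would need an analogous ``fattened cone'' construction compatible with an arbitrary boundary condition, which is precisely the content of \Cref{new lower bound}; as written, your proposal proves the theorem only for $M$ bounded in terms of $c$.
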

\nin (Throughout the paper, $o_n(1)$ means the quantity tends to 0 as $n \ra \infty$ (and doesn't depend on other parameters), and $o_n(\cdot)$ is used similarly.) The function $o_n(1)$ on the right-side of \eqref{MT_conclusion} doesn't depend on $M$, so in particular, \Cref{MT_M-Lips} holds even for $M \ra \infty$. We can take the constant $c$ slightly smaller than $1/2$ if $M$ is large enough.

The main motivation for \Cref{MT_M-Lips} comes from the work in \cite{peled2013lipschitz} (ths result there was recently improved in \cite{krueger2024lipschitz}), which provides an {upper bound} on the range of random Lipschitz functions (``flatness of Lipschitz functions") when the host graph $G$ exhibits a "large expansion." Here we define the notion of large expansion more rigorously: following \cite{peled2013lipschitz}, say an $n$-vertex, $d$-regular graph $G$ is a \emph{$\lambda$-expander} if
\beq{def.expander}
\left|e(S,T)-\frac{d}{n}|S||T|\right|\le\gl \sqrt{|S||T|} \quad \text{for all $S, T \sub V(G)$} ,
\enq
where $e(S,T)$ is the number of edges with one endpoint in $S$ and the other in $T$, counted twice if both endpoints are in $S \cap T$. From the definition in \eqref{def.expander}, one can easily derive the following fact for any $\gl$-expander $G$ and $A \sub V(G)$ (see \cite[Proposition 2.3]{peled2013lipschitz}):
\beq{expansion} |N(A)| \ge \min\left\{\frac{n}{2}, \frac{d^2}{4\gl^2}|A|\right\},\enq
where $N(A):=\cup_{v \in A} N(v)$. Note that \eqref{expansion} provides a quantified (lower) bound on the expansion of a set $A \sub V(G)$.

One of the main results in \cite{peled2013lipschitz} was the following:
\begin{thm}\label{thm.PSY} Let $G$ be a connected $\gl$-expander where $\gl\le\frac{d}{32(M+1)\log(9Md^2)}$, and $v_0 \in V(G)$. If $f \in_R\Lip_{v_0}(G;M)$, then 
\beq{range.ub'} R(f)=O\left(M\left\lceil\frac{\log\log n}{\log(d/\gl)}\right\rceil\right)\enq
    w.h.p. as $n \ra \infty.$
\end{thm}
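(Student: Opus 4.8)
\textbf{Proof proposal for \Cref{thm.PSY}.}
The plan is to follow the standard ``coarse-graining plus cutset'' strategy for proving flatness of Lipschitz functions on expanders. Fix a height level $\ell\in\Z$ and let $A_\ell=\{v:f(v)\ge \ell\}$ and $B_\ell=\{v:f(v)\le \ell-M-1\}$; these two sets are separated by at least one ``layer'' in the sense that there is no edge between $A_\ell$ and $B_\ell$, because an edge would force a jump of size $\ge M+1$. The key object is, for each level $\ell$, a vertex-cutset $\Gamma_\ell$ that separates $A_\ell$ from $B_\ell$: take $\Gamma_\ell=N(A_\ell)\setminus A_\ell$ when $A_\ell$ is the side of smaller $N$-expansion, or the analogous set on the $B$ side, chosen so that $|\Gamma_\ell|$ controls things. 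The idea is that if the range $R(f)$ is large, say $R(f)\ge t$, then there are $\gtrsim t/M$ disjoint ``nontrivial'' levels $\ell$, each contributing a cutset, and these cutsets are vertex-disjoint since they live in disjoint height-intervals. Summing $|\Gamma_\ell|$ over these levels gives at most $n$, so the typical $|\Gamma_\ell|$ is $\lesssim Mn/t$, which is small when $t$ is large.

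The heart of the argument is an entropy / counting bound showing that a typical level set cannot have a small cutset: if $\Gamma$ is a vertex cutset of size $s$ separating a large part of $V$ from the rest, then the number of Lipschitz functions ``compatible'' with that configuration is exponentially smaller than $|\Lip_{v_0}(G;M)|$. Concretely, I would run the following comparison (``shift'' / cluster-expansion-free) argument: given $f$ with level set $A=A_\ell$ and cutset $\Gamma$ with $|\Gamma|=s$, modify $f$ by adding a suitable constant (between $1$ and $M$, or $-1$ and $-M$) to all vertices strictly inside the smaller side of $\Gamma$, leaving $\Gamma$ and the other side fixed. Since $\Gamma$ is a cutset, the only edges whose Lipschitz constraint is affected are those incident to $\Gamma$, of which there are at most $ds$; a careful bookkeeping (essentially the argument of \cite{peled2013lipschitz}) shows this map is ``many-to-one enough'' that the measure of the event $\{\Gamma_\ell=\Gamma\}$ is at most $\exp(-c\, \mathrm{something})$ times a quantity summable over $\Gamma$. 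The expander hypothesis enters through \eqref{expansion}: it guarantees that the smaller side of any level cutset has size comparable to (a constant times) the number of vertices it ``captures,'' so a cutset of size $s$ must actually enclose on the order of $(d/\lambda)^2 s$ vertices — this is what forces cutsets to be either tiny or else to grow geometrically as we peel off successive levels, and iterating this $\Theta(\log\log n/\log(d/\lambda))$ times exhausts all $n$ vertices, which is the source of the $\log\log n/\log(d/\lambda)$ bound.

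More precisely, I would define $L=\lceil\log\log n/\log(d/\lambda)\rceil$ and argue by contradiction: if $R(f)\ge C M L$ with non-negligible probability, pick $C$ large enough that there are $\ge 2L$ ``well-separated'' levels. Consider the level-set sizes along these levels; by the expansion bound \eqref{expansion}, if the cutset at the top level has size $s_0$, then going down one usable level multiplies the enclosed volume by at least $\min\{n/(2s),\, (d/2\lambda)^2\}$, so after $\sim L$ steps the enclosed volume would have to exceed $n$ unless some cutset is very large (size $\gtrsim n/\mathrm{polylog}$) — but a union bound over the (at most $\binom{n}{s}\le n^s$) choices of a cutset of size $s$, times the per-cutset bound $\exp(-\Omega(s))$ coming from the shift argument, shows $\pr[\exists \text{ a level with cutset of size } s\ge s^\ast]=o(1)$ once $s^\ast$ is, say, $\Omega(\log n \cdot (M+1)\log(Md^2))$; here the precise hypothesis $\lambda\le \frac{d}{32(M+1)\log(9Md^2)}$ is exactly what is needed to make the exponent $-\Omega(s)$ beat the entropy $s\log n$. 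Combining, w.h.p.\ every level cutset is small, hence enclosed volumes grow geometrically with ratio $\ge (d/2\lambda)^2$, hence there are fewer than $O(\log_{(d/\lambda)}\log n)=O(L)$ usable levels, contradicting $R(f)\ge CML$; so $R(f)=O(ML)$ w.h.p.

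\textbf{Main obstacle.} The delicate point — and the place where I expect to spend the most effort — is the per-cutset probability bound: the naive ``add a constant to the inside'' map is \emph{not} injective, and controlling the multiplicity requires tracking, for each affected edge near $\Gamma$, how much slack was used, essentially repackaging the cluster-expansion / coarse-graining estimate of \cite{peled2013lipschitz} with explicit constants so that it survives the union bound over all $\binom{n}{s}$ cutsets. A secondary subtlety is that the natural cutsets $N(A_\ell)\setminus A_\ell$ across different levels need not be disjoint when $M>1$ (two levels within distance $M$ can share boundary vertices), so one must either restrict to levels that are $\ge 2M+2$ apart (losing only a constant factor in $R(f)$, which is harmless) or argue disjointness more carefully; I would take the former, cheaper route.
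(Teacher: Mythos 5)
Note first that the paper does not prove \Cref{thm.PSY} at all: it is quoted verbatim from \cite{peled2013lipschitz} as background (the upper bound that \Cref{MT_M-Lips} complements), so there is no in-paper proof to compare against. Judged on its own terms, your sketch does reconstruct the broad strategy of Peled--Samotij--Yehudayoff --- level sets $A_\ell$, vertex cutsets around them, a shift transformation that gains probability per cutset vertex, and the expansion bound \eqref{expansion} forcing successive cutsets to grow geometrically --- so the architecture is right.

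However, two quantitative steps as you state them would fail. First, the union bound ``at most $\binom{n}{s}\le n^s$ cutsets times a per-cutset bound $\exp(-\Omega(s))$'' cannot close: $n^s e^{-cs}$ does not go to zero, and nothing in the hypotheses makes the exponent proportional to $s\log n$. The actual argument must enumerate only the cutsets that can arise (they bound components of level sets, hence are neighborhoods of connected sets), which costs entropy roughly $s\log(Md^2)$ per cutset rather than $s\log n$; this is precisely why the hypothesis reads $\gl\le\frac{d}{32(M+1)\log(9Md^2)}$ --- the gain per cutset vertex from the shift map, which is on the order of $d/(\gl(M+1))$ bits rather than an absolute constant, must beat $\log(9Md^2)$. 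Second, your derivation of the $\log\log n$ is off: if enclosed volumes grow geometrically with ratio $(d/2\gl)^2$, exhausting $n$ vertices takes $\Theta(\log n/\log(d/\gl))$ levels, not $\Theta(\log\log n/\log(d/\gl))$. The doubly-logarithmic bound comes instead from the probabilistic estimate: a vertex at height $\approx Mk$ forces a cutset of size $\approx (d/\gl)^{2k}$ somewhere below it, and once $(d/\gl)^{2k}\gtrsim\log n$ the per-cutset probability beats the union bound over the $n$ possible starting vertices; solving $(d/\gl)^{2k}\approx\log n$ is what yields $k\approx\log\log n/\log(d/\gl)$. Your paragraph introducing $s^\ast=\Omega(\log n\cdot(M+1)\log(Md^2))$ gestures at this but conflates the two mechanisms; as written the contradiction at the end (``fewer than $O(\log_{(d/\gl)}\log n)$ usable levels'') does not follow from the volume-growth argument you give.
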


We note that \Cref{MT_M-Lips} provides a lower bound on $R(f)$ that matches the upper bound in 
\eqref{range.ub'} up to a constant factor. More precisely, an immediate consequence of \Cref{MT_M-Lips} is that if the given graph $G$ admits the property that for each $A \sub V(G)$, $|N(A)| \lesssim (d/\gl)^2|A|$ (\textit{cf.} \eqref{expansion}), then, w.h.p., $R(f)=\gO\left(\frac{M\log\log n}{\log(d/\gl)} \right)$ (\textit{cf.} \eqref{range.ub'}) for $f \in_R \Lip_{v_0}(G;M)$. 

We also reiterate that \Cref{MT_M-Lips} holds for arbitrary $M$, even for $M \ra \infty$, which we think is interesting in the following context. It is easy to see that $\gl$ cannot be significantly smaller than $\sqrt d$ for any $\gl$-expanders (e.g. plug in any single vertex $w$ for $S$ and $N(w)$ for $T$), by which the validity of \Cref{thm.PSY} is limited to $M= O(\sqrt d/\log d)$. Peled et al asked whether a result similar to \Cref{thm.PSY} continue to hold for larger $M$, or even for $f \in_R \Lip_{v_0}(G;\infty)$. The upper bound on $\lambda$ in the hypothesis of the above theorem is relaxed to $\gl\le d/5$ in a recent work of Krueger, Li, and the second author \cite{krueger2024lipschitz}, but still a certain upper bound assumption on $M$ was required there (e.g. for $\lambda$ close to $\sqrt d$, it was required that $M= \min\{O(d/\log d), (\log n)^{O(1)}\}$). A similar restriction on $M$ also appeared in \cite{butler2024local} that study random $M$-Lipschitz functions on regular trees. All of those results leave it as an interesting question whether the behavior of the range of random Lipschitz functions on expanders would be different for $M \gg d$. We note that, in \cite{peled2013grounded}, where the behavior of "grounded Lipschitz functions on trees" is considered, it was suspected that random Lipschitz functions will exhibit different behaviors when $M \gg d$. We refer interested readers to the last paragraph of \cite[Section 2]{peled2013grounded} for more details.

\Cref{MT_M-Lips} implies an analogous result for $f \in_R\Lip_{v_0}(G;\infty)$ as below.

\begin{thm}\label{MT_Lips}
    There is a constant $c>0$ for which the following holds. Let $G$ be a connected graph on $n$ vertices. Fix $v_0 \in V(G)$, and let $f$ be chosen uniformly at random from $\Lip_{v_0}(G;\infty)$. Then for any $r$ that satisfies $\max\{|B_r(v)|:v \in V(G)\}<c\log n$,
    \[\pr\left[R(f)< \frac{r}{2}\right] =o_n(1).\]
\end{thm}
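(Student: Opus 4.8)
The plan is to deduce Theorem~\ref{MT_Lips} directly from Theorem~\ref{MT_M-Lips} by a discretization argument: an $\mathbb{R}$-valued $1$-Lipschitz function is well-approximated by an $M$-Lipschitz function on the rescaled lattice $\frac{1}{M}\mathbb{Z}$, and the range transforms correctly under this correspondence. More concretely, fix a large integer $M$ (to be sent to infinity eventually, or chosen as a suitable function of $n$), and consider the map that sends $f \in \Lip_{v_0}(G;\infty)$ to $\lfloor Mf \rfloor$, an integer-valued function. One checks that $\lfloor Mf\rfloor$ is an $(M+1)$-Lipschitz function (the floor introduces an error of at most $1$ per edge), while conversely every $g \in \Lip_{v_0}(G;M)$ arises, up to the rounding ambiguity, from some $f$ with $g/M$ approximately $1$-Lipschitz. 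The key point is that this correspondence is "almost measure-preserving" in the sense that a uniformly random $g \in \Lip_{v_0}(G;M)$ corresponds, after dividing by $M$, to something that converges (as $M\to\infty$) to a uniformly random element of $\Lip_{v_0}(G;\infty)$, because $\Lip_{v_0}(G;\infty)$ is a bounded polytope and the lattice points $\frac1M \Lip_{v_0}(G;M)$ equidistribute in it.

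The cleanest route avoids delicate equidistribution estimates: instead I would argue that the event $\{R(f) < r/2\}$ for $f \in \Lip_{v_0}(G;\infty)$ is (essentially) a sub-event of a corresponding event for the rescaled $M$-Lipschitz model, so the $o_n(1)$ bound transfers. Specifically, one sets up a bijection (or a bounded-to-one map with controlled fibers) between $\Lip_{v_0}(G;\infty)$ restricted to functions taking values in $\frac1M\mathbb{Z}$ and the set $\Lip_{v_0}(G;M)$, via $f \mapsto Mf$; this bijection scales the range by a factor of $M$ up to an additive $O(1)$. Since $R(f)$ is continuous and $\Lip_{v_0}(G;\infty)$ is a polytope, $R(f)$ for a genuinely uniform $f \in \Lip_{v_0}(G;\infty)$ and $R(g/M)$ for uniform $g\in\Lip_{v_0}(G;M)$ have distributions that are within $o_M(1)$ in, say, L\'evy distance. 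Then: apply Theorem~\ref{MT_M-Lips} to $g \in_R \Lip_{v_0}(G;M)$ with the same $r$ (noting that the hypothesis $\max|B_{r-1}(v)| \le c\log n$ for Theorem~\ref{MT_M-Lips}, and the slightly stronger $\max|B_r(v)| < c\log n$ stated here, is what buys the needed slack, and that the $o_n(1)$ there is uniform in $M$), obtaining $\pr[R(g) < Mr/2] = o_n(1)$; translate back through the bijection to get $\pr[R(f) < r/2 - O(1/M)] = o_n(1) + o_M(1)$; and finally send $M \to \infty$ (legitimately, since the $o_n(1)$ from Theorem~\ref{MT_M-Lips} does not depend on $M$) to conclude $\pr[R(f) < r/2] = o_n(1)$.

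The main obstacle I anticipate is making the "almost measure-preserving" claim precise: a uniformly random $g \in \Lip_{v_0}(G;M)$, after scaling, is uniform on lattice points of the polytope $\Lip_{v_0}(G;\infty)$, not uniform on the polytope itself, so one must control the discrepancy between the uniform measure on $\frac1M(\mathbb{Z}^{V}\cap M\cdot\Lip_{v_0}(G;\infty))$ and Lebesgue measure on $\Lip_{v_0}(G;\infty)$. For a fixed graph $G$ (hence fixed dimension $|V|-1$ of the polytope) this discrepancy is $O(1/M)$ by a standard Riemann-sum/lattice-point-counting estimate, and crucially this suffices because we take $M \to \infty$ only \emph{after} fixing $n$; there is no need for the estimate to be uniform in $n$. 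A secondary technical point is the $\pm1$ slack between $M$-Lipschitz and $(M+1)$-Lipschitz functions coming from rounding, but this is absorbed harmlessly since the conclusion of Theorem~\ref{MT_M-Lips} is stated with the factor $Mr/2$ and is robust to replacing $M$ by $M+1$, and since we are taking a limit in $M$ anyway.
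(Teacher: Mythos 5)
Your proposal is correct and takes essentially the same route as the paper: reduce to Theorem~\ref{MT_M-Lips} via the scaling $g\mapsto g/M$, then send $M\to\infty$, exploiting that the $o_n(1)$ in Theorem~\ref{MT_M-Lips} is uniform in $M$. The paper simply cites the distributional convergence of $f_M/M$ to $f$ (Proposition~\ref{convergence}, attributed to Peled--Samotij--Yehudayoff) and invokes the Continuous Mapping Theorem, whereas you sketch the underlying lattice-point/equidistribution argument for that convergence; the substance is the same.
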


\nin Again, we can take $c$ slightly smaller than $1/2$.

In fact, \cite[Theorem 2.1]{benjamini2007random} already proves a version of \Cref{MT_M-Lips} for random $\Z$-homomorphisms, and it was noted in \cite{peled2013lipschitz} that "It appears that by applying the techniques in \cite{benjamini2007random} one may obtain a converse to \Cref{thm.PSY}, showing that $\mathbb E[\max(f)] \ge cM\log\log n$ for some $c$ depending only on $d$ and $\gl$." However, as far as we could see, a straightforward application of the method in \cite{benjamini2007random} gives \Cref{MT_M-Lips} only for bounded $M$ (see \Cref{rmk}), and so our main task was to derive the same conclusion for arbitrary $M$ (and thus for $f \in \Lip_{v_0}(G;\infty)$). \Cref{subs.pf sketch} provides a brief proof sketch of \Cref{MT_M-Lips}; \Cref{MT_Lips} is almost an immediate consequence of \Cref{MT_M-Lips}.

As corollaries, we obtain that the range of a random $M$-Lipschitz function from a graph of ``small" degree is ``large."

\begin{cor}\label{Cor_M-Lips}
There is a constant $c>0$ for which the following holds.    Let $G$ be a connected graph on $n$ vertices with maximum degree $d=d(n)$. Fix $v_0 \in V(G)$ and let $f$ be chosen uniformly at random from $\Lip_{v_0}(G;M)$. Then,
    \[\pr\left[R(f)< \frac{cM\log\log n}{\log d} \right] =o_n(1).\]
\end{cor}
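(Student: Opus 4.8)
The plan is to obtain \Cref{Cor_M-Lips} as a direct consequence of \Cref{MT_M-Lips}, by choosing the radius $r$ in that theorem as large as the hypothesis $\max\{|B_{r-1}(v)|:v\in V(G)\}\le c_0\log n$ permits; here $c_0$ denotes the constant furnished by \Cref{MT_M-Lips}. Since the conclusion is asymptotic, I may assume $n$ is large, and in particular that $d=d(n)\ge 2$ (a connected graph on at least three vertices has a vertex of degree at least $2$, and for $d=1$ the bound in the corollary is vacuous). The one routine input is the crude ball-growth estimate $|B_s(v)|\le\sum_{i=0}^{s}d^i\le d^{s+1}$, valid for every $v\in V(G)$ and every $s\ge 0$.

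Next I would let $r=r(n,G)$ be the largest integer with $r\ge 1$ and $\max\{|B_{r-1}(v)|:v\in V(G)\}\le c_0\log n$. For $n$ large this is well defined: $r=1$ is admissible since $|B_0(v)|=1$, while any $r$ with $r-1\ge\operatorname{diam}(G)$ is inadmissible since then $B_{r-1}(v)=V(G)$ has size $n>c_0\log n$. Maximality gives $\max_v|B_{r}(v)|>c_0\log n$, which combined with the ball estimate yields $d^{r+1}>c_0\log n$, i.e.\ $(r+1)\log d>\log(c_0\log n)$, hence $r\log d>\log(c_0\log n)-\log d$. Using in addition the trivial bound $r\log d\ge\log d$ together with $\max\{a,b\}\ge\tfrac{a+b}{2}$, I get
\[
r\log d\ \ge\ \tfrac12\bigl(\log d+(\log(c_0\log n)-\log d)\bigr)\ =\ \tfrac12\log(c_0\log n)\ \ge\ \tfrac13\log\log n
\]
for $n$ large, and therefore $\tfrac{Mr}{2}\ge\tfrac{M\log\log n}{6\log d}$.

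To finish, I would apply \Cref{MT_M-Lips} with this choice of $r$ — legitimate precisely because $r$ was chosen to satisfy the ball hypothesis — to conclude $\pr[R(f)<Mr/2]=o_n(1)$; since $\{R(f)<\tfrac{M\log\log n}{6\log d}\}\subseteq\{R(f)<Mr/2\}$, the corollary follows with $c=\tfrac16$ (any smaller constant works as well). Note that $M$ plays no role in the event inclusion, so the argument is uniform in $M$, matching the remark after \Cref{MT_M-Lips}.

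The step I would be most careful about is not a single computation but making the choice of radius robust across the whole range $2\le d\le n$: when $d$ is as large as a fixed power of $\log n$ the hypothesis forces $r=1$, and one must still verify that $\tfrac{M\log\log n}{6\log d}$ does not exceed $M/2$ (which holds since then $\log d\ge\tfrac12\log\log n$ for $n$ large), whereas for $d$ of order $\log n$ the maximal admissible $r$ is small and it is exactly the ``$\max\{\log d,\ \log(c_0\log n)-\log d\}$'' bookkeeping above that keeps $r\log d$ of order $\log\log n$. Choosing $r$ to be the largest admissible radius packages both regimes into one line, and — since the $o_n(1)$ error in \Cref{MT_M-Lips} is uniform in $G$, $M$, and $r$ — no further care is needed.
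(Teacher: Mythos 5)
Your proof is correct and follows essentially the same route as the paper: convert the maximum-degree hypothesis into the ball-growth bound $|B_s(v)|\le d^{s+1}$, choose the largest admissible radius $r$ for \Cref{MT_M-Lips}, and observe that maximality forces $r=\Omega(\log\log n/\log d)$. The paper states this in one line (using $(d+1)^r$ and ``some $r=\Omega(\log\log n/\log d)$''), whereas you make the choice of $r$ explicit and verify the two regimes ($d$ of order $\log n$ versus $d$ polylogarithmic or larger) carefully; this extra bookkeeping is sound but is not a genuinely different argument.
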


\begin{cor}\label{Cor_Lips}
There is a constant $c>0$ for which the following holds. Let $G$ be a connected graph on $n$ vertices with maximum degree $d=d(n)$. Fix $v_0 \in V(G)$ and let $f$ be chosen uniformly at random from $\Lip_{v_0}(G;\infty)$. Then,
    \[\pr\left[R(f)< \frac{c\log\log n}{\log d}\right]=o_n(1).\]
\end{cor}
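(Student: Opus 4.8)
The plan is to deduce \Cref{Cor_Lips} directly from \Cref{MT_Lips} by choosing a suitable radius $r$, in complete parallel with how \Cref{Cor_M-Lips} follows from \Cref{MT_M-Lips}. Since $G$ is connected and $n \ra \infty$, we may assume $d \ge 2$ (if $d\le 1$ then $n\le 2$), so that $\log d \ge 1$ and the quantity $\frac{c\log\log n}{\log d}$ makes sense. The only ingredient beyond \Cref{MT_Lips} is the elementary observation that in a graph of maximum degree $d \ge 2$ one has $|B_r(v)| \le 1 + d + \cdots + d^r \le 2d^r$ for every vertex $v$.

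Let $c_0$ be the constant provided by \Cref{MT_Lips}, and set $r = \big\lfloor \tfrac12\cdot\frac{\log\log n}{\log d}\big\rfloor$. Then $d^r \le d^{\,\frac12 \log\log n / \log d} = (\log n)^{1/2}$, so for all $n$ large enough (a threshold depending only on $c_0$) we get $\max\{|B_r(v)| : v \in V(G)\} \le 2(\log n)^{1/2} < c_0\log n$, which is exactly the hypothesis of \Cref{MT_Lips}; hence $\pr[R(f) < r/2] = o_n(1)$ (recall the $o_n(1)$ in \Cref{MT_Lips} is uniform over the admissible graphs, so this remains valid with $r = r(n)$). To convert $r/2$ into the stated bound, write $t := \frac{\log\log n}{\log d}$ and split on its size. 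If $t \ge 4$, then $r \ge \tfrac12 t - 1 \ge \tfrac14 t$, so $r/2 \ge \tfrac18 t$ and therefore $\pr[R(f) < \tfrac18 t] \le \pr[R(f) < r/2] = o_n(1)$. If $t < 4$, then $\tfrac18 t < 1$, and since $R(f) = \max f - \min f + 1 \ge 1$ holds for every $f$, we trivially have $\pr[R(f) < \tfrac18 t] = 0$. In either case the corollary holds with $c = 1/8$ (the constant can be traded for a cleaner threshold).

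There is no real obstacle here: the entire probabilistic content lives in \Cref{MT_Lips}, hence ultimately in \Cref{MT_M-Lips}. The only point requiring any care is the regime where $d$ is large — specifically $\log d \gtrsim \log\log n$ — in which the target $\frac{c\log\log n}{\log d}$ is merely a bounded quantity and the chosen $r$ degenerates; there one falls back on the deterministic bound $R(f) \ge 1$, and choosing $c$ small enough ($c = 1/8$ suffices) ensures this degenerate regime is covered automatically.
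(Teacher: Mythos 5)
Your derivation is correct and follows the same route as the paper: bound $|B_r(v)|$ in terms of $d^r$, pick $r=\Theta(\log\log n/\log d)$ so the hypothesis of \Cref{MT_Lips} is met, and apply it. The paper states this in one line; you supply the explicit constants and, sensibly, dispose of the degenerate regime where $\frac{\log\log n}{\log d}$ is bounded (where $R(f)\ge 1$ already gives the claim for $c$ small), which the paper leaves implicit.
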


\begin{proof}[Derivation of Corollaries \ref{Cor_M-Lips} and \ref{Cor_Lips}] First,  let $G$ and $f$ be as in \Cref{Cor_M-Lips}, and $c$ be the constant in \Cref{MT_M-Lips}. Observe that, by the maximum degree condition,
\beq{tree}
\max\{|B_r(v)|: v \in V(G)\} \leq (d+1)^r.\enq
Therefore, we have $\max\{|B_{r-1}(v)|:v \in V(G)\} \leq c\log n$ for some $r=\Omega(\log\log n/\log d)$. Now, the conclusion of \Cref{Cor_M-Lips} follows from Theorem \ref{MT_M-Lips}.

The conclusion of \Cref{Cor_Lips} follows similarly from \Cref{MT_Lips}.    
\end{proof}

\subsection*{Graphs with logarithmic degree.} In this section, we present our next main result, \Cref{MT_torus}.

The conclusions of Corollaries \ref{Cor_M-Lips} and \ref{Cor_Lips} are vacuous if the maximum degree of $G$ dominates $\log n$, which motivates the study of $R(f)$ on graphs with logarithmic degree. 

In \cite{benjamini2007random}, which was the main motivation for \Cref{MT_torus}, Benjamini, Yadin, and Yehudayoff investigated typical range of random $\mathbb Z$-homomorphisms on various graphs with logarithmic degree. Of particular interest was graphs with a large diameter, with the following motivation: Kahn \cite{kahn2001range} showed that the range of random $\mathbb Z$-homomorphisms on $d$-dimensional Hamming cube, $Q_d$, is $O(1)$ w.h.p., confirming a conjecture of Benjamini et al \cite{benjamini2000random} in a very strong way. (Their original conjecture was $o(d)$; the constant $O(1)$ in \cite{kahn2001range} was subsequently improved to 5 by Galvin \cite{galvin2003homomorphisms}.) The Hamming cube has a logarithmic degree (that is, the degree $d=\log n$ where $n=|V(Q_d)|$) but it also has a quite small diameter compared to the number of vertices which could have been the reason for the small range. So a natural question here is whether a graph with logarithmic degree and a large diameter could have a small range.

With this motivation, one specific family of graphs considered in \cite{benjamini2007random} is $C_{n,k}$, where $n, k \in \mathbb N$ and $n$ is even (see \Cref{adjacent}): the vertex set of $C_{n,k}$ is $[n] \times [k]$, and two vertices $(u,u')$ and $(w,w')$ are adjacent iff $|u-w|=1$. In other words, it is a cycle of $n$ layers, where each layer has $k$ vertices and is connected to both its adjacent layers by a complete bipartite graph. In particular, $C_{n,k}$ can have an arbitrarily large diameter by the choice of the value of $n$.
\begin{figure}[h]
    \begin{center}
    \begin{tikzpicture}[scale=0.5]
        \def\n{4} 
        \def\m{4} 
        \def\l{4} 
        \def\v{1.5} 
        \def\h{2.5} 
        \def\z{5} 
        \foreach \i in {1,...,\n} {
            \node (l\i) at (0, -\i*\v) [circle, fill=black, inner sep=1.5pt] {};
        }
        \foreach \j in {1,...,\m} {
            \node (r\j) at (\h, -\j*\v) [circle, fill=black, inner sep=1.5pt] {};
        }
        \foreach \k in {1,...,\l} {
            \node (y\k) at (\z, -\k*\v) [circle, fill=black, inner sep=1.5pt] {};
        }
        \foreach \i in {1,...,\n} {
            \foreach \j in {1,...,\m} {
                \draw (l\i) -- (r\j);
            }
        }
        \foreach \j in {1,...,\m} {
            \foreach \k in {1,...,\l} {
                \draw (r\j) -- (y\k);
            }
        }
    \end{tikzpicture}
    \end{center}
    \caption{Three adjacent layers of $C_{n,k}$ for $k=4$}
    \label{adjacent}
\end{figure}
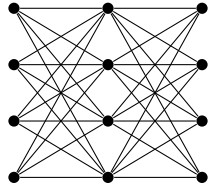

In \cite{benjamini2007random}, it was shown that there is a sharp transition for the range of random $\mathbb Z$-homomorphisms from $C_{n,k}$. We use $\Hom_{v_0}(G)$ for the family of $\mathbb Z$-homomorphisms $f$ from $G$ with $f(v_0)=0$ for some fixed $v_0 \in V(G)$.

\begin{thm}[Theorems 3.1-3.2 in \cite{benjamini2007random}]\label{BYY_MT} Fix $v_0 \in V(C_{n,k})$, and let $f$ be chosen uniformly at random from $\Hom_{v_0}(G)$. Let $\psi:\mathbb N \ra \mathbb R^+$ be such that $\lim_{n \ra \infty} \psi(n)=\infty$. If $k=2\log n+\psi(n)$, then, as $n \ra \infty$,
\beq{range.ub} \pr[R(f)\le 3] = 1 - o_n(1).\enq
On the other hand, if $k=2\log n-\psi(n)$ and $\psi(n)$ is monotone, then, as $n \ra \infty$,
\[\pr\left[R(f)\ge\frac{2^{\psi(n-2)/4}}{\psi(n)}\right] = 1 - o_n(1).\]
\end{thm}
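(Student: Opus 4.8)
The plan is to work from an exact structural description of $\Hom_{v_0}(C_{n,k})$. Since any two consecutive layers span a copy of the complete bipartite graph $K_{k,k}$, a short case analysis shows that for every homomorphism $f$ and every layer $i$ the set of values taken on layer $i$ is either a singleton $\{a\}$ or a ``split'' pair $\{a,a+2\}$, and that in the split case both layers $i-1$ and $i+1$ are the singleton $\{a+1\}$. Call layer $i$ a \emph{pair layer} when it is split. Then the pair layers form an independent set $S$ in the cycle $C_n$; the remaining (singleton) layers carry single values $h_i$ with $|h_i-h_j|=1$ whenever $i,j$ are consecutive singleton layers and $h_i=h_j$ whenever a pair layer lies between them; and every split value set equals $\{h-1,h+1\}$, where $h$ is the common value of the two flanking singleton layers. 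Conversely, any such ``(independent set, skeleton walk)'' datum extends to a homomorphism, and for a fixed datum the number of homomorphisms is exactly $(2^k-2)^{|S|}$, since every pair layer must realize both of its two permitted values among its $k$ vertices and is otherwise unconstrained.

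Write $L:=n-2|S|$ for the number of $\pm1$ steps of the skeleton walk $(h_i)$. Counting data with a prescribed value of $L$ — choosing the pair layers, choosing the $\pm$ signs of the $L$ walk steps (which must sum to $0$), and coloring the pair layers — gives, up to a $\Theta(1)$ correction coming from the normalization $f(v_0)=0$,
\[
C(L)\ \asymp\ \binom{(n+L)/2}{L}\binom{L}{L/2}\,(2^k-2)^{(n-L)/2},
\]
and hence $C(L+2)/C(L)\asymp n^2/(L^2\,2^k)$. So under the uniform measure $L$ is approximately Poisson with mean $\mu\asymp n\,2^{-k/2}$, and the transition occurs exactly at $\mu=1$, i.e.\ at $k=2\log n$. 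In the supercritical case $k=2\log n+\psi$ one has $\mu=2^{-\psi/2}\to0$; moreover if $L=0$ then $|S|=n/2$, all singleton layers share a single value $m$, $f$ takes only $m-1,m,m+1$, and $R(f)=3$, so $R(f)\ge4$ forces $L\ge2$. Since $C(2)/C(0)\asymp n^2/2^k=2^{-\psi}$ and $C(L+2)/C(L)=o(1)$ for all $L\ge2$, we get $\sum_{L\ge2}C(L)=O(2^{-\psi})\,C(0)$ while $\sum_L C(L)\asymp C(0)$, so $\pr[R(f)\ge4]=O(2^{-\psi})=o_n(1)$; this gives \eqref{range.ub}.

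In the subcritical case $k=2\log n-\psi$ with $\psi$ monotone, one has $\mu=2^{\psi/2}\to\infty$. Put $\rho:=2^{\psi(n-2)/4}/\psi(n)$ and $\ell_0:=\tfrac12 2^{\psi/2}$, and observe that $R(f)$ is always at least the range of the skeleton walk, because pair layers can enlarge the range by at most $2$. I would bound
\[
\pr[R(f)<\rho]\ \le\ \pr[L<\ell_0]\ +\ \pr\bigl[\,\text{skeleton range}<\rho\ \text{and}\ L\ge\ell_0\,\bigr].
\]
For the first term, $C(L)$ is increasing on $[0,\mu]$ and $C(\ell_0)/C(\mu)=e^{-\Omega(\mu)}$ by the ratio estimate, so $\pr[L<\ell_0]\le\ell_0\,C(\ell_0)/C(\mu)=o_n(1)$. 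For the second term, conditioned on $S$ and on $L$ the $\pm$ signs of the skeleton walk are uniform, so the walk is a uniformly random $\pm1$ bridge of length $L$; the probability that such a bridge remains inside a window of width $\rho$ is at most $e^{-cL/\rho^2}\le e^{-c\ell_0/\rho^2}$, and since $\ell_0/\rho^2\asymp\psi(n)^2\to\infty$ this is $o_n(1)$ uniformly over $L\ge\ell_0$. (The discrepancy between $\psi(n-2)$ and $\psi(n)$, the extra $1/\psi(n)$ factor, and the monotonicity of $\psi$ are precisely what keep $\rho$ comfortably below $\sqrt{\ell_0}$ and absorb the losses.)

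I expect the estimate for $C(L)$ to be the main obstacle: one must exploit that $\binom{(n+L)/2}{L}\approx (n/2)^L/L!$ lives in the strongly unbalanced regime rather than the entropy regime — this is exactly what turns $L$ into a near-Poisson variable and puts the threshold at $2\log n$ instead of $\log n$ — and the estimate has to be uniform enough in $L$ to control the whole tail $\sum_{L\ge2}$ in the supercritical case and the whole range $L<\ell_0$ in the subcritical case. The secondary technical point is the random-walk confinement bound $e^{-cL/\rho^2}$, which is needed uniformly over all $L\ge\ell_0$.
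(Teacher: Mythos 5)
The paper does not prove this statement: it is quoted verbatim (Theorems 3.1--3.2 of Benjamini--Yadin--Yehudayoff) as background for Theorem 1.7, and the only thing the paper says about its proof is that BYY exploit the mono-/bi-chromatic layer structure and then ``use induction on the number of layers.'' So there is no in-paper proof to compare against, and I can only assess your proposal on its own terms and against that one-sentence description of BYY's method.

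Your structural observation is correct and is exactly the one the paper attributes to BYY: on a complete-bipartite join, a $\Z$-homomorphism forces each layer to take either one value or two values differing by $2$, and a bichromatic layer forces both neighbouring layers to be constant at the midpoint; consequently the bichromatic layers form an independent set $S$ in $C_n$, the constant layers carry a closed walk with $\pm1$ steps across the $L=n-2|S|$ singleton--singleton adjacencies, and each bichromatic layer contributes a factor $2^k-2$. Where you depart from BYY is what you do with this: instead of an induction over layers you run a global enumeration, obtaining $C(L)\asymp\frac{2n}{n+L}\binom{(n+L)/2}{(n-L)/2}\binom{L}{L/2}(2^k-2)^{(n-L)/2}$ (your formula, with the $\Theta(1)$ factor $\frac{2n}{n+L}$ restored), whose ratio $\frac{C(L+2)}{C(L)}=\frac{((n+L)/2+1)((n-L)/2)}{(L+1)(L+2)}\cdot\frac{(L+1)(L+2)}{(L/2+1)^2}\cdot\frac{1}{2^k-2}\asymp\frac{n^2}{L^2 2^k}$ exhibits $L$ as approximately Poisson with mean $n2^{-k/2}$. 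This makes the location of the threshold at $k=2\log n$ transparent, which the inductive proof does not, at the cost of needing the ratio estimate uniformly in $L$ (your closed form above does give this uniformly). The supercritical conclusion then follows because $L=0$ forces $R(f)=3$ (there is at least one bichromatic layer since $|S|=n/2\ge1$) and $\sum_{L\ge2}C(L)=O(2^{-\psi})C(0)$.

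Two technical points you should make explicit when writing this up. First, the bridge-confinement bound: conditioning on $S_L=0$ costs a factor $\Theta(\sqrt{L})$ against the free-walk estimate, so what you actually get is $\pr[\text{range}<\rho\mid L]\le C\sqrt{L}\,e^{-cL/\rho^2}$; since for $L\ge\ell_0$ you have $L/\rho^2\ge\ell_0/\rho^2\gtrsim\psi(n)^2$ while $\log\sqrt{L}$ is handled by the fact that $\sqrt{L}e^{-cL/\rho^2}$ is decreasing for $L\ge\ell_0\gg\rho^2$, the extra $\sqrt{L}$ is harmless, but the chain of inequalities should be written out. Second, the $\Theta(1)$ normalisation from $f(v_0)=0$ splits into two cases depending on whether $v_0$'s layer lies in $S$ (a factor of at most $2$), which should be recorded so the claimed $\asymp$ in $C(L)$ is honest. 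With those caveats the proposal is, as far as I can tell, a correct and genuinely different (global-counting rather than inductive) proof of the cited theorem.
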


Notice that, regardless of the value of $k$, there is an obvious way to construct (many) $f$'s in $\Hom_{v_0}(G)$ with $R(f)=3$: one can set 
\[\text{$f \equiv c$ for the even (odd, resp.) layers and $f \in \{c \pm 1\}$ for the odd (even, resp.) layers}\]
(the value of $c$ depends on which layer $v_0$ belongs to). So the first conclusion of the above theorem provides a (tight) sufficient condition under which random $\Z$-homomorphisms exhibit ``no fluctuations."

The proof of \Cref{BYY_MT} crucially uses the fact that, under the consideration of $\Z$-homomorphisms, all the layers must be ``monochromatic" (i.e., $f$ takes a single value) or ``bi-chromatic." This special property enables one to use induction on the number of layers. This approach, however, doesn't seem to extend to $M$-Lipschitz functions where each layer has (far) more diverse options. Nonetheless, the next theorem provides a sufficient condition for random $M$-Lipschitz functions on $C_{n,k}$ exhibit no fluctuations, a behavior similar to \eqref{range.ub}. (Note that we can produce many $M$-Lipschitz functions $f$ with $R(f)=M+1$ by setting $f \in [a, a+M]$ for some $a$, where for two integers $b<c$, $[b,c]:=\{b, b+1, \ldots, c\}$.)
\begin{thm}\label{MT_torus}
    There is a constant $C$ for which the following holds. For any $\gamma >C$, if $k> \gamma M^2\log (Mn)$, then for $f$ chosen uniformly at random from $\Lip_{v_0}(C_{n,k};M)$,
    \beq{Lips.range.ub} \pr\left(R(f) \le M+1\right)= 1-O(1/\gamma). \enq
Furthermore, if $\log M=o_n(\log n)$, then
    \[ \pr\left(R(f) \le M+1\right)= 1- o_n(1). \]
\end{thm}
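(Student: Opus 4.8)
The plan is a direct counting (entropy) argument exploiting that $C_{n,k}$ is layered. Write $L_1,\dots,L_n$ for the layers; each $L_i$ is an independent set whose edges go only to $L_{i-1}$ and $L_{i+1}$, forming complete bipartite graphs. For $f\in\Lip_{v_0}(C_{n,k};M)$ record its \emph{profile} $\Pi(f)=\big((m_i,M_i)\big)_{i=1}^{n}$ with $m_i=\min_{L_i}f$, $M_i=\max_{L_i}f$. Since consecutive layers are joined completely, $f$ is $M$-Lipschitz iff every adjacent pair is \emph{$M$-compatible}, i.e.\ $M_i-m_{i+1}\le M$ and $M_{i+1}-m_i\le M$; and for a compatible $\Pi$ the number of $f$ realizing it factorizes over layers and is at most $\prod_i (M_i-m_i+1)^{k}$ (one factor dropping to $(M_1-m_1+1)^{k-1}$ because of $f(v_0)=0$, which also forces $m_1\le 0\le M_1$, assuming $v_0\in L_1$). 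On the other hand every $f$ with all values in $[0,M]$ is admissible and has $R(f)\le M+1$, so $\#\{R(f)\le M+1\}\ge (M+1)^{nk-1}$. Hence it suffices to bound $\sum_{\Pi\ \mathrm{bad}}\prod_i(M_i-m_i+1)^{k}$, the sum over compatible profiles with $R\ge M+2$, and divide by $(M+1)^{nk-1}$.

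I would classify bad profiles by $s(\Pi):=\#\{i:\,M_i-m_i\ne M\}$, the number of \emph{defective} layers. Writing $u_i:=M_i-m_i$, compatibility gives $m_{i+1}-m_i\in[\,u_i-M,\;M-u_{i+1}\,]$; hence along any run of non-defective layers the interval $[m_i,M_i]$ is constant, so a compatible profile is piecewise constant with at most $2s(\Pi)$ runs, and the total variation of $(m_i)$ around the cycle is at most $2s(\Pi)M$, so all the $m_i$ lie in a window of size $O(s(\Pi)M)$ around $0$. Consequently the number of compatible profiles with $s(\Pi)=s$ is at most $(CMn)^{Cs}$ for an absolute constant $C$. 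The geometric crux is that $R(f)\ge M+2$ forces $s(\Pi(f))\ge 2$: if no layer is defective all layers share one length-$M$ interval and $R(f)=M+1$; a single defective narrow layer still keeps every value inside one length-$M$ window (its interval is squeezed back into the common interval of its non-defective neighbours); and a single defective wide layer is outright incompatible with having a non-defective neighbour, since then the admissible range $[\,u_i-M,\,M-u_{i+1}\,]$ for the step in $m$ is empty. So every bad profile has $s\ge2$.

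It remains to bound the entropy weight of a bad profile. A defective \emph{narrow} layer ($u_i\le M-1$) contributes $(u_i+1)^k\le(M+1)^{k}\bigl(M/(M+1)\bigr)^{k}$; a defective \emph{wide} layer ($u_i=M+t$, $t\ge1$) contributes a surplus, but then both of its neighbours are forced to have width $\le M-t$, and pairing each wide layer with one such narrow neighbour that pair contributes $(M+1+t)^k(M+1-t)^k=(M+1)^{2k}\bigl(1-t^2/(M+1)^2\bigr)^k$ in place of the baseline $(M+1)^{2k}$. After this bookkeeping (a bounded loss in the pairing) one gets, for any compatible $\Pi$,
\[
\prod_i(M_i-m_i+1)^{k}\ \le\ (M+1)^{nk}\,\bigl(1-(M+1)^{-2}\bigr)^{s(\Pi)k/C'}\ \le\ (M+1)^{nk}\,e^{-\,c\,k\,s(\Pi)/M^{2}}
\]
for absolute constants $C',c>0$ (using $(M+1)^2\le 4M^2$). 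Combining this with the profile count and the lower bound on $\#\{R(f)\le M+1\}$ yields
\[
\pr\bigl(R(f)\ge M+2\bigr)\ \le\ (M+1)\sum_{s\ge 2}(CMn)^{Cs}\,e^{-cks/M^{2}} .
\]
Under $k>\gamma M^2\log(Mn)$ each summand is at most $\bigl((CMn)^{C}(Mn)^{-c'\gamma}\bigr)^{s}$ for an absolute $c'>0$, so once $\gamma$ exceeds a suitable absolute constant $C_0$ the series is a convergent geometric sum of size $\mathrm{poly}(M)\cdot(Mn)^{-\Omega(\gamma)}$. Taking $\gamma$ large this is $2^{-\Omega(\gamma)}$ uniformly in $n$, giving the first bound $O(1/\gamma)$; and since the bound has the shape $\mathrm{poly}(M)\cdot n^{-\Omega(1)}$, it tends to $0$ as $n\to\infty$ as soon as $\log M=o_n(\log n)$ (so that $\mathrm{poly}(M)=n^{o(1)}$), which gives the second statement. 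I expect the genuinely delicate part to be the structural paragraph — making precise that a bad profile is "locally flat away from $O(s)$ layers" and, in particular, handling \emph{wide} layers correctly, since they both enlarge the profile count and create local entropy surpluses that must be charged to adjacent narrow layers. Getting the $k s/M^2$ rate in the deficit (rather than something weaker) is exactly what lets the $(Mn)^{O(s)}$ profile count be absorbed under $k\gg M^2\log(Mn)$; the $M^2$ in the hypothesis is precisely the worst case, realized by "oscillating" profiles in which narrow and wide layers alternate.
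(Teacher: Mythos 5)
Your proposal is correct, and it takes a genuinely different route from the paper. The paper runs a conditional-entropy argument in the style of Kahn's proof for the Hamming cube: it establishes a local lemma bounding $\pr(|\bar\bff_{N_v^+}|\ne M+1)$ via Shearer's inequality applied to a $(2k)$-fold cover of $V(C_{n,k})$, with a bootstrap in which a crude bound $\eps=O(M^2\log M/k)$ is fed back through further conditional-entropy estimates (conditioning on indicator events $Q_w$, $Q_{v,w}$) to reach $\eps=O\bigl(\log M/(\gamma n\log(Mn))\bigr)$; a union bound along a Hamiltonian-type cycle of layers then finishes. You instead do a bare first-moment calculation over the profile vector $\Pi(f)=(m_i,M_i)_i$, which works cleanly here precisely because $C_{n,k}$ is a sequence of complete-bipartite joins, so the $M$-Lipschitz condition is equivalent to profile compatibility and the number of $f$ realizing $\Pi$ factors over layers. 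The structural analysis you flagged as delicate is in fact sound: wide layers force both neighbours narrow (hence $s=1$ wide is incompatible and $s=1$ narrow still gives $R\le M+1$, so bad profiles need $s\ge2$); and pairing each wide layer $i$ with its left neighbour $i-1$ is injective, giving $\#\text{pairs}+\#\text{unpaired narrows}\ge s/2$ and hence $\prod_i(u_i+1)\le (M+1)^n\bigl(1-(M+1)^{-2}\bigr)^{s/2}$, which is exactly the $e^{-cks/M^2}$ rate you need. Combined with the profile count $(CMn)^{Cs}$ (positions, widths, $m_1$, and at most $2s$ nonzero steps of the $m_i$'s), the series over $s\ge2$ is dominated by a geometric series once $k>\gamma M^2\log(Mn)$ for $\gamma$ past an absolute constant, giving both the $O(1/\gamma)$ bound and the $o_n(1)$ bound when $\log M=o_n(\log n)$, matching the theorem. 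Compared with the paper, your argument is more elementary (no Shearer, no conditional-entropy bootstrap) and is arguably more transparent about where the $M^2\log(Mn)$ threshold comes from (the worst case is the alternating $M\pm t$ profile, where the per-pair deficit is only $\Theta(t^2/M^2)$ while the profile count is $(Mn)^{\Theta(s)}$); what it gives up is the local lemma of the paper, which controls $\bar\bff_{N_v^+}$ at a single vertex and may be of independent use, and an argument template (Shearer on a fold cover) that is not tied to the one-dimensional layered structure.
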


We remark that \eqref{Lips.range.ub} cannot hold true for arbitrarily large $M$; for a simple illustration, for large $M$ (say, $M \gg V(C_{n,k})$), the event that all vertices take values from $[0,M]$ and the event that the layers take values alternately $[0, M+1]$ and $[1,M]$ are almost equally likely. We suspect that the assumption in \Cref{MT_torus} may be improved to $k=\omega(M\log (Mn))$, which would be tight (in the order of magnitude) if true as shown in the example below.

\begin{example}  If $(M \ll)~k \le \frac{M\ln(Mn)}{2}$, then the conclusion of \Cref{MT_torus} doesn't hold. To see this, let $\alpha$ be the number of $M$-Lipschitz functions in which all vertices take values from $[0,M]$, and $\beta$ be that with one vertex takes the fixed value $M+1$ and its neighbors take values from $[1,M]$ (and all the other vertices are in $[0,M]$). Then $\beta/\alpha= ((n-2)k-1)\left(\frac{M}{M+1}\right)^{2k} \gg 1$.
\end{example}

The proof of \Cref{MT_torus} adapts the beautiful entropy argument of Kahn \cite{kahn2001range}, but there were two main obstacles to overcome. First, \cite{kahn2001range} applies union bound of some exceptional probability over a cycle that traverses all the layers of $Q_d$, so indeed the small diameter of $Q_d$ plays a role in the proof. This property is absent in $C_{n,k}$. Second, more substantially, the proof of \cite{kahn2001range} also heavily uses the fact that, for any vertex $v$, $N(v)$ is either monochromatic or bi-chromatic. Kahn's proof was extended to ``multicolor cases" (and more) by Engbers-Galvin \cite{engbers2012h}, but their proof crucially assumes that the number of given colors is constant, while in the setting of \Cref{MT_torus}, $R(f)$ (``the number of colors") can be as large as linear in both $M$ and $n$.

Finally, we note that for the direction that corresponds to the second conclusion of \Cref{BYY_MT}, immediate applications of Theorems \ref{MT_M-Lips} and \ref{MT_Lips} provides a sufficient condition for the range of random Lipschitz functions on $C_{n,k}$ to be large:

\begin{cor} \label{cor1}
    There is a constant $C$ for which the following holds. Let $f$ be chosen uniformly at random from $\Lip_{v_0}(C_{n,k};M)$. If $k \le \log n/\gamma$, then
    \[\pr\left[R(f) < C\gamma M\right] =o_n(1).\]
\end{cor}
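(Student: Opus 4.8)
The plan is to deduce \Cref{cor1} directly from \Cref{MT_M-Lips} applied to $G=C_{n,k}$, which has $N:=nk\ge n$ vertices. The only structural input needed is a crude bound on ball sizes in $C_{n,k}$. Since two vertices $(u,u')$ and $(w,w')$ lie at graph-distance equal to the cycle-distance between $u$ and $w$ when $u\ne w$, and at distance $2$ when $u=w$ but $u'\ne w'$, the ball $B_s(v)$ is contained in the union of those $\le 2s+1$ layers whose index is within cycle-distance $s$ of the layer containing $v$; hence
\[\max\{|B_s(v)|:v\in V(C_{n,k})\}\le(2s+1)k\qquad\text{for every }s\ge 0\]
(the case $s=0$ uses only $k\ge 1$).

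Next I would fix the constants and choose the radius. Let $c$ be the constant of \Cref{MT_M-Lips} and put $C:=c/6$, splitting on the size of $\gamma$. If $\gamma\le 3/c$, apply \Cref{MT_M-Lips} with $r=1$: its hypothesis reads $\max\{|B_0(v)|\}=1\le c\log N$, which holds for all large $n$, so $\pr[R(f)<M/2]=o_n(1)$, and since $C\gamma M\le(c/6)(3/c)M=M/2$ in this range, $\pr[R(f)<C\gamma M]\le\pr[R(f)<M/2]=o_n(1)$. If $\gamma>3/c$, take $r:=\lceil c\gamma/3\rceil\ge 2$; an elementary computation (using $r\le c\gamma/3+1$) gives $2r-1\le c\gamma$ in this range, so by the hypothesis $k\le\log n/\gamma$,
\[\max\{|B_{r-1}(v)|:v\in V(C_{n,k})\}\le(2r-1)k\le c\gamma\cdot\frac{\log n}{\gamma}=c\log n\le c\log N,\]
and \Cref{MT_M-Lips} yields $\pr[R(f)<Mr/2]=o_n(1)$. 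Since $r\ge c\gamma/3=2C\gamma$, we have $Mr/2\ge C\gamma M$, whence $\pr[R(f)<C\gamma M]\le\pr[R(f)<Mr/2]=o_n(1)$, as required.

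Finally I would add one line explaining why the error is genuinely $o_n(1)$: the error bound supplied by \Cref{MT_M-Lips} depends only on the number of vertices $N$ (not on $M$, $r$, or $\gamma$), and $N=nk\ge n\to\infty$ as $n\to\infty$ uniformly in $k\ge 1$, so the bound tends to $0$. I do not anticipate any real obstacle: this is a direct corollary of \Cref{MT_M-Lips}. The only points requiring care are the elementary ball-size estimate in $C_{n,k}$ and choosing $r=\Theta(\gamma)$ so that the hypothesis $(2r-1)k\le c\log N$ of \Cref{MT_M-Lips} and the target inequality $Mr/2\ge C\gamma M$ hold simultaneously — this is precisely what forces the split at $\gamma\approx 3/c$ and pins down the constant $C$.
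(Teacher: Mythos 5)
Your proposal is correct and takes essentially the same approach as the paper: the authors treat \Cref{cor1} as an immediate consequence of \Cref{MT_M-Lips} applied to $C_{n,k}$ (which has $N=nk$ vertices), and you simply supply the two elementary ingredients they leave implicit — the ball-size estimate $|B_s(v)|\le(2s+1)k$ in $C_{n,k}$ and the choice $r=\Theta(\gamma)$ making the hypothesis $\max\{|B_{r-1}(v)|\}\le c\log N$ and the target $Mr/2\ge C\gamma M$ hold simultaneously. Your remark that the $o_n(1)$ bound in \Cref{MT_M-Lips} is uniform over the admissible $(M,r)$ and depends only on the vertex count $N\ge n$ is exactly the observation needed to justify the asymptotics.
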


\begin{cor} \label{cor2}
    There is a constant $C$ for which the following holds. Let $f$ be chosen uniformly at random from $\Lip_{v_0}(C_{n,k};\infty)$. If $k\le \log n/\gamma$, then
    \[\pr\left[R(f) < C \gamma \right] =o_n(1).\]
\end{cor}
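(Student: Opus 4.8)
The plan is to read off \Cref{cor2} as a direct application of \Cref{MT_Lips} to $G=C_{n,k}$; note that the relevant vertex count is $N:=nk$, and it is this $N$ (not the cycle length $n$) that plays the role of ``$n$'' in \Cref{MT_Lips}. Apart from that theorem, the only thing one needs is a bound on ball sizes in $C_{n,k}$.

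First I would record the ball bound. Writing a vertex as $(i,j)$ with $i\in[n]$, $j\in[k]$, the neighbours of $(i,j)$ are exactly the $2k$ vertices in layers $i-1$ and $i+1$ (indices modulo $n$), so one step changes the layer index by $\pm1$. Consequently every vertex within graph-distance $r$ of $(i,j)$ lies in one of the at most $2r+1$ layers $i-r,\dots,i+r$, and as each layer has $k$ vertices,
\[\max\{|B_r(v)|:v\in V(C_{n,k})\}\le(2r+1)k\qquad\text{for all }r\ge0.\]

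Next I would choose $r$. Let $c$ be the constant of \Cref{MT_Lips}. Since $k\le\log n/\gamma$ and $N=nk\ge n$, we have $c\log N\ge c\log n\ge c\gamma k$. Hence, once $\gamma$ exceeds a suitable absolute constant, the choice $r:=\lfloor c\gamma/8\rfloor$ satisfies $(2r+1)k\le(c\gamma/2)k\le(c/2)\log n<c\log N$, so the hypothesis of \Cref{MT_Lips} holds for this $r$ and the theorem gives $\pr[R(f)<r/2]=o_n(1)$. Since also $r/2\ge c\gamma/16-1\ge c\gamma/32$ for $\gamma$ above an absolute constant, taking $C:=c/32$ yields $\pr[R(f)<C\gamma]\le\pr[R(f)<r/2]=o_n(1)$. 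For the remaining values of $\gamma$ (those with $C\gamma<1$) there is nothing to prove, since $R(f)=\max_vf(v)-\min_vf(v)+1\ge1>C\gamma$ holds deterministically because $\min_vf(v)\le f(v_0)=0\le\max_vf(v)$.

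I do not anticipate a real obstacle; the content lies entirely in \Cref{MT_Lips}, and the rest is bookkeeping. The two points that merit a little care are: keeping the cycle parameter $n$ distinct from the vertex count $N=nk$ appearing in \Cref{MT_Lips}; and the (elementary) fact that, because $C_{n,k}$ has no edges inside a layer, a radius-$r$ ball meets only $2r+1$ layers rather than growing exponentially in $r$ — this linear growth of balls is exactly what makes $r=\Theta(\gamma)$ admissible under the assumption $k\le\log n/\gamma$. The companion statement \Cref{cor1} is obtained in the same way from \Cref{MT_M-Lips}, using $\max_v|B_{r-1}(v)|\le(2r-1)k$ together with that theorem's conclusion $\pr[R(f)<Mr/2]=o_n(1)$.
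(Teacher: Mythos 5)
Your argument is correct and is exactly the intended derivation — the paper declares \Cref{cor1} and \Cref{cor2} to be ``immediate applications'' of Theorems~\ref{MT_M-Lips} and~\ref{MT_Lips} without supplying details, and your write-up supplies precisely those details: the ball bound $|B_r(v)|\le(2r+1)k$ in $C_{n,k}$, the observation that the vertex count $N=nk\ge n$ is what enters \Cref{MT_Lips}, and the choice $r=\Theta(\gamma)$, with the trivial case $C\gamma<1$ handled separately. No gaps.
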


\nin\textit{Organization.} In 
\Cref{sec.prelim}, we collect some basics about the entropy function for its use in \Cref{sec.torus}. The main results, \Cref{MT_M-Lips}, \Cref{MT_Lips} and \Cref{MT_torus} are proved in \Cref{sec.M-Lips},   \Cref{sec.Lips}, and \Cref{sec.torus}, respectively.

\mn\textit{Notation.} For integers $a$ and $b$, $[a]=\{1, 2, \ldots, a\}$ and $[a,b]=\{a, \ldots, b\}$. As usual, $u \sim w$ means $\{u,w\} \in E$, and for $W \sub V$, $u \sim W$ means $u \sim w$ for some $w \in W$. For $W \sub V$, $G[W]$ denotes the induced subgraph of $G$ on $W$. We use $\dist(u,v)$ for the length of a shortest path between $u$ and $v$ in $G$. Our asymptotic symbols are standard, and in particular, we use $O(f)$ if the quantity of interest is bounded by $C\cdot f$ for some absolute constant $C$.

\section{Entropy basics} \label{sec.prelim}
For $p \in [0,1]$, we define
\[H(p)=p\log\left(\frac{1}{p}\right)+(1-p)\log\left(\frac{1}{1-p}\right),\]
where $0\log\frac{1}{0} := 0$.

For a discrete random variable $X$, the \emph{(binary) entropy} of $X$ is
\[ H(X) := \sum_x \pr(X=x) \log\frac{1}{\pr(X=x)}.\]
For discrete random variables $X$ and $Y$, we define
\[ H(X|Y=y) = \sum_x \pr(X=x|Y=y) \log\frac{1}{\pr(X=x|Y=y)} .\]
The \emph{conditional entropy} of $X$ given $Y$, denoted by $H(X|Y)$, is
\beq{conEntro}
H(X|Y) = \sum_y \pr(Y=y) H(X|Y=y) = \sum_y \pr(Y=y) \sum_x \pr(X=x|Y=y) \log\frac{1}{\pr(X=x|Y=y)} .
\enq

Below are some standard facts about entropy:

\begin{prop}\label{prop.ent} The following holds for discrete random variables $X$, $Y$, $Z$, and $X_1, \dots X_n$.
\begin{enumerate}[(a)]
\item $H(X) \le \log |\mathrm{Image}(X)|$, where $\mathrm{Image}(X)$ is the set of values $X$ takes with positive probability, with equality iff $X$ is uniform from its image.
\item $H(X,Y)=H(X)+H(Y|X)$
\item $H(X|Y) \le H(X)$
\item if $Z$ is determined by $Y$, then
$H(X|Y) \le H(X|Z)$ and $H(Y,Z|X) = H(Y|X)$.
\end{enumerate}
\end{prop}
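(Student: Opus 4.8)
The plan is to derive all four items from two ingredients: Jensen's inequality for the (strictly concave) logarithm, and the bookkeeping that comes from factoring a joint probability mass function. For (a), write $H(X) = \sum_x p_x\log(1/p_x) = \mathbb{E}\!\left[\log(1/p_X)\right]$, where $p_X$ denotes the probability of the value actually taken by $X$; Jensen gives $\mathbb{E}[\log(1/p_X)] \le \log\mathbb{E}[1/p_X] = \log\sum_x p_x\cdot p_x^{-1} = \log|\mathrm{Image}(X)|$, with equality iff $1/p_X$ is almost surely constant, i.e. $X$ is uniform on its image. For (b), substitute $p(x,y) = p(x)\,p(y\mid x)$ into $H(X,Y) = \sum_{x,y} p(x,y)\log\frac1{p(x,y)}$, split the logarithm, and collect the inner $y$-sum in the second term; comparing with the definition \eqref{conEntro} yields $H(X,Y) = H(X) + H(Y\mid X)$.

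For (c), I would rewrite $H(X) = \sum_{x,y} p(x,y)\log\frac1{p(x)}$ and compare with the definition of $H(X\mid Y)$ to obtain
\[ H(X) - H(X\mid Y) \;=\; \sum_{x,y} p(x,y)\log\frac{p(x,y)}{p(x)p(y)} \;=\; \sum_{x,y} p(x,y)\Bigl(-\log\tfrac{p(x)p(y)}{p(x,y)}\Bigr); \]
Jensen applied to the convex function $-\log$ bounds the right-hand side below by $-\log\sum_{x,y} p(x)p(y) = 0$, so $H(X\mid Y) \le H(X)$. For the first claim in (d), the conditional form of (b) gives $H(Y,Z\mid X) = H(Y\mid X) + H(Z\mid X,Y)$, and $H(Z\mid X,Y) = 0$ because $Z$ is a function of $Y$, hence of $(X,Y)$. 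For the inequality in (d), note first that $H(X\mid Y) = H(X\mid Y,Z)$, since conditioning additionally on $Z$ (a deterministic function of $Y$) contributes nothing; then, for each fixed value $z$ of $Z$, apply (c) inside the conditional probability space given $\{Z = z\}$ to get $H(X\mid Y, Z = z) \le H(X\mid Z = z)$, and average over $z$ with weights $p(z)$ to conclude $H(X\mid Y) = H(X\mid Y,Z) \le H(X\mid Z)$.

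I do not expect a genuine obstacle here --- these are the standard elementary properties of Shannon entropy. The only points needing care are the conditional-entropy manipulations, in particular the ``apply an unconditional inequality inside the world $\{Z = z\}$, then average over $z$'' step used for (d), and the implicit convention $0\log\frac10 = 0$, which lets all sums run only over atoms of positive probability.
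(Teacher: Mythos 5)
The paper states \Cref{prop.ent} without proof, simply introducing it as ``some standard facts about entropy,'' so there is no in-paper argument to compare against. Your proof is correct and is the standard one: Jensen for (a), factoring the joint mass function for (b), nonnegativity of mutual information via Jensen for (c), and the conditional chain rule plus ``average (c) over the fibers of $Z$'' for (d). The one place you should tighten the wording is in (c): when you apply Jensen, the sum runs only over pairs $(x,y)$ in the support of the joint law, so what you actually get is
\[
\sum_{x,y} p(x,y)\Bigl(-\log\tfrac{p(x)p(y)}{p(x,y)}\Bigr) \;\ge\; -\log\!\!\sum_{(x,y):\,p(x,y)>0}\!\! p(x)p(y) \;\ge\; -\log 1 \;=\; 0,
\]
since that restricted sum is at most $1$ (with equality only when the joint support is the full product of the marginal supports). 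The inequality you need survives, but ``$=0$'' overstates the intermediate step. Everything else, including the slightly delicate move in (d) of noting $H(X\mid Y)=H(X\mid Y,Z)$ and then applying (c) inside each conditional world $\{Z=z\}$ before averaging, is exactly right.
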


We also need the following version of \emph{Shearer's Lemma} \cite{chung1986some}.

\begin{lem}\label{lem.Sh}
Let $X=(X_1, \dots, X_N)$ be a random vector, and suppose $\alpha:2^{[N]}\rightarrow \mathbb R^+$
satisfies
\[ \sum_{A\ni i}\alpha_A =1 \quad \forall i\in [N].\]
Then for any partial order $\prec$ on $[N]$, we have
\[ H(X)\leq \sum_{A\subseteq [N]}\alpha_AH\big(X_A|(X_i:i \prec A)\big),\]
where $X_A=(X_i:i\in A)$ and $i\prec A$ means $i\prec a$ for all $a\in A$.
\end{lem}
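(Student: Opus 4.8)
The plan is to derive the inequality from the chain rule for entropy, taking care to respect the partial order $\prec$ throughout. \emph{Setup.} First I would fix a linear extension of $\prec$ and reindex the coordinates of $X$ so that the natural order on $[N]$ refines $\prec$; that is, we may assume that $i \prec j$ implies $i < j$, a reindexing that changes neither $H(X)$ nor --- after the corresponding relabelling of $\alpha$ --- the hypothesis or the conclusion. Write $X_{<i} := (X_j : 1 \le j < i)$ and $X_{\prec A} := (X_j : j \prec A)$. Iterating the chain rule (Proposition~\ref{prop.ent}(b)) gives
\[ H(X) = \sum_{i=1}^{N} H\big(X_i \mid X_{<i}\big). \]

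\emph{Expanding the right-hand side.} Fix $A = \{a_1 < a_2 < \dots < a_k\} \subseteq [N]$. Applying the chain rule again, in its conditional form,
\[ H\big(X_A \mid X_{\prec A}\big) = \sum_{l=1}^{k} H\big(X_{a_l} \mid X_{\prec A},\, X_{a_1}, \dots, X_{a_{l-1}}\big). \]
The key point is that, for each $l$, every variable conditioned on in the $l$-th summand lies among $X_1, \dots, X_{a_l - 1}$: the variables $X_{a_1}, \dots, X_{a_{l-1}}$ have indices smaller than $a_l$ by the enumeration of $A$, while if $j \prec A$ then $j \prec a_l$ (since $a_l \in A$), hence $j < a_l$ because the natural order extends $\prec$. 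Since conditioning on more variables cannot increase entropy --- the conditional version of Proposition~\ref{prop.ent}(c), obtained by applying (c) to each conditional distribution and averaging --- the $l$-th summand is at least $H\big(X_{a_l} \mid X_{<a_l}\big)$, and therefore
\[ H\big(X_A \mid X_{\prec A}\big) \;\ge\; \sum_{i \in A} H\big(X_i \mid X_{<i}\big). \]

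\emph{Combining.} Summing the last display against the nonnegative weights $\alpha_A$ and interchanging the order of summation,
\[ \sum_{A \subseteq [N]} \alpha_A\, H\big(X_A \mid X_{\prec A}\big) \;\ge\; \sum_{A \subseteq [N]} \alpha_A \sum_{i \in A} H\big(X_i \mid X_{<i}\big) = \sum_{i=1}^{N} \Big(\sum_{A \ni i} \alpha_A\Big) H\big(X_i \mid X_{<i}\big) = \sum_{i=1}^{N} H\big(X_i \mid X_{<i}\big) = H(X), \]
where the penultimate equality uses the hypothesis $\sum_{A \ni i} \alpha_A = 1$ and the last uses the chain-rule identity from the setup; this is exactly the claimed bound. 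I do not expect a genuine obstacle here. The only point requiring attention is the simultaneous compatibility of the three orderings in play --- the fixed linear extension of $\prec$, the increasing enumeration of each set $A$, and the resulting predecessor sets $X_{<a_l}$ --- and it is precisely this bookkeeping that lets the partial-order (conditional) refinement be absorbed into the classical unconditioned form of Shearer's lemma.
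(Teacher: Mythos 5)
Your proof is correct. The paper does not actually prove this lemma---it is stated with a citation to Chung, Graham, Frankl, and Shearer---but your argument (fix a linear extension of $\prec$, expand $H(X)$ and each $H(X_A \mid X_{\prec A})$ by the chain rule, lower-bound each conditional term using the fact that extra conditioning cannot increase entropy, then swap the order of summation using $\sum_{A \ni i}\alpha_A = 1$) is precisely the standard proof of this form of Shearer's lemma, and every step you use is supported by Proposition~\ref{prop.ent} (part (d) with $Z = X_S$ determined by $Y = X_T$ for $S \subseteq T$ gives the monotonicity of conditioning you need directly).
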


\section{Proof of \Cref{MT_M-Lips}}\label{sec.M-Lips}

We first introduce some notations that will be used in this section. Denote by $B_r(v) ~(\sub V(G))$ the \textit{ball of radius $r$ centered at $v$}, that is, $B_r(v):=\{u \in V(G):\dist(u,v)\le r\}$. We say the ball $B_r(v)$ has \textit{exact radius} $r$ if their exists a vertex $u \in B_r(v)$ such that dist$(u, v) = r$. Define the $i^{th}$ layer of $B_r(v)$ as $L_i(v) = B_{r-i}(v) \setminus B_{r-i-1}(v)$. When $B_r(v)$ has exact radius $r$ (and $r$ and $v$ are understood), we will write $\Gamma := L_0(v)$ for the \textit{boundary} of $B_r(v)$ and  $B^o$ for $B_{r-1}(v)=B_r(v) \setminus \Gamma$. 

For any function $f$ and a subset $W$ of the domain of $f$, $f|_W$ is used for $f$ with its domain restricted on $W$.

\subsection{Proof sketch}\label{subs.pf sketch} We follow the proof strategy of Benjamini, Yadin, and Yehudayoff \cite{benjamini2007random} who showed a similar result for random $\Z$-homomorphisms. Roughly, they showed that 

(i) there are "many" disjoint balls of exact radius $r$ in $V(G)$ (\Cref{many balls});

(ii) for each of the balls (say its center is $w$) and $f \in_R \Hom_{v_0}(G)$, the probability that $f(w)$ is "high" is bounded from below by some quantity. 

\nin Combining (i) and (ii), they concluded that there is a good chance that some center of the balls have a high value of $f$.

We note that this approach extends to random $M$-Lipschitz functions almost immediately if $M$ is not too large, since the lower bound in (ii) above still carries over to $M$-Lipschitz functions for such $M$. However, the lower bound decays as $M$ grows, so, in particular, extending this result to $M \ra \infty$ requires more careful arguments. These are the contents of \Cref{sec.M-LIps_pf}.

We close this section by recalling a lemma from \cite{benjamini2007random} which constitutes the first step of the proof.

\begin{lem}[Claim 2.5 in \cite{benjamini2007random}] \label{many balls}
    For any $r \in \mathbb N$ the following holds. Let $m=m(G,r)$ be the maximum size of a ball of radius $r$ in $G$. For any $W \sub V(G)$, there exists $U \subseteq W$ of size $|U| \geq \left\lfloor \frac{|W|}{m^2} \right\rfloor$ such that 
    \begin{enumerate}
        \item for all \( u \in U \), the ball \( B_r(u) \) (in $G$) is of exact radius $r$; and
        \item for all distinct \( u, u' \in U \), we have \( B_r(u) \cap B_r(u') = \emptyset \).
    \end{enumerate}
\end{lem}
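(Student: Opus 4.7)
The plan is to run a standard greedy packing argument after handling a vacuous edge case. Note that $B_r(u)$ has exact radius $r$ if and only if the eccentricity of $u$ in $G$ is at least $r$.

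First, I would split on whether every vertex of $W$ has eccentricity at least $r$. If some $v \in W$ has eccentricity at most $r-1$, then $V(G) \subseteq B_{r-1}(v)$, so $|W| \le |V(G)| \le m(G,r-1) \le m$ and hence $\lfloor |W|/m^2 \rfloor = 0$, making the conclusion vacuous. Thus I may assume every $v \in W$ already satisfies condition (1) of the lemma, and I only need to enforce disjointness.

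Second, I would establish the geometric bound $|B_{2r}(u)| \le m^2$ for every $u$. Any $w$ with $\dist(u,w) \le 2r$ either already lies in $B_r(u)$ or, taking the $r$-th vertex $v$ along a shortest path from $u$ to $w$, satisfies $\dist(v,w) \le r$, so $w \in B_r(v)$ for some $v \in B_r(u)$. Hence $B_{2r}(u) \subseteq \bigcup_{v \in B_r(u)} B_r(v)$, whose cardinality is at most $|B_r(u)| \cdot m \le m^2$.

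Third, I would run the greedy: starting from $W' = W$, repeatedly pick any $u \in W'$, add it to $U$, and delete $W' \cap B_{2r}(u)$ from $W'$. Each iteration removes at most $m^2$ elements, so the process selects at least $\lfloor |W|/m^2 \rfloor$ vertices. For any two chosen $u \ne u'$ we have $\dist(u,u') > 2r$ (otherwise the later one would already have been deleted), and this is equivalent to $B_r(u) \cap B_r(u') = \emptyset$. Combined with the first step, the resulting $U$ satisfies both conditions.

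There is essentially no hard step: the only subtlety is the bound $|B_{2r}(u)| \le m^2$, where one might reflexively reach for $m(G,2r)$, which is not directly controlled by $m = m(G,r)$. The exact-radius requirement is dispatched purely by the initial case split rather than by the greedy itself.
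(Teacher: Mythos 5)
Your argument is correct: the case split disposing of the exact-radius condition (if some $v\in W$ has eccentricity below $r$ then $V(G)\subseteq B_{r-1}(v)$ and the bound is vacuous), the estimate $|B_{2r}(u)|\le m^2$, and the greedy deletion of $2r$-balls together give exactly the claimed packing. The paper does not reprove this lemma but imports it as Claim 2.5 from Benjamini--Yadin--Yehudayoff, whose proof is this same standard greedy argument, so there is nothing further to compare.
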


\subsection {Proof of \Cref{MT_M-Lips}}\label{sec.M-LIps_pf} 

Our key result in this section is the lemma below which provides a lower bound on the probability that the center of a ball admits a high value of $f$. 

    \begin{lem} \label{lower prob}
There exists a constant $C$ for which the following holds. Let positive integers $M$ and $r$ be given. Let $B = B_{r}(v)$ be a ball of exact radius $r$ and $v_0 \notin B$. Let $g_0 \in \Lip_{v_0}(G[V \setminus B_{r-1}(v)]; M)$ be an $M$-Lipschitz function that can be extended to an $M$-Lipschitz function in $\Lip_{v_0}(G;M).$ Define
        \[\cG = \{g \in \Lip_{v_0}(G[B]; M): g|_{\Gamma} = g_0|_{\Gamma}\};\]
        and
        \[\cG' = \{g \in \Lip_{v_0}(G[B]; M):\max(g)-\min(g|_\Gamma) \geq \frac{Mr}{2} \text{ and } g|_{\Gamma} = g_0|_{\Gamma}\}.\]
        Then, for $m = m(G, r)$ the maximum size of a ball of radius $r-1$ in $G$, 
        \beq{height.lb} \frac{|\cG'|}{|\cG|} \geq C^{-m}.\enq
    \end{lem}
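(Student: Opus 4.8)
# Proof Proposal for Lemma \ref{lower prob}

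The plan is to recast the ratio as a probability and then to produce enough ``tall'' functions by reflections. Since $\min(g|_\Gamma)=\min(g_0|_\Gamma)=:a$ is a fixed number and $\max(g)\ge g(v)$, it suffices to show
\[\frac{\bigl|\{g\in\cG:\ g(v)\ge a+\tfrac{Mr}{2}\}\bigr|}{|\cG|}\ \ge\ C^{-m}.\]
Two easy facts will be used freely: (i) every $w\in\Gamma$ has $\dist(v,w)=r$ inside $G[B]$ (a geodesic from $v$ to $w$ stays in $B_r(v)$), so the maximal $M$-Lipschitz extension of $g_0|_\Gamma$ already equals $a+Mr$ at $v$ — in particular $\cG'\neq\emptyset$ with room to spare; and (ii) $|\cG|\le(2M+1)^{|B^o|}\le(2M+1)^m$, by revealing $B^o$ in BFS order from $\Gamma$.

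\emph{Main idea (large $M$): a reflection staircase.} Put $L_0=a$ and $L_k=a+kM/2$ for $1\le k\le r$. Given $g\in\cG$, let $R_0(g)$ agree with $g$ off $B^o$ and equal $\max\bigl(g(u),2a-g(u)\bigr)$ on $B^o$; then for $k=1,\dots,r$ let $R_k$ replace the current value $x$ by $\max(x,2L_k-x)$ on the ball $B_{r-k}(v)$ and leave it unchanged elsewhere; set $\Phi=R_r\circ\cdots\circ R_1\circ R_0$. Reflecting about a constant is $1$-Lipschitz, and after $R_{k-1}$ the function is $\ge L_{k-1}$ on $B_{r-k+1}(v)$, so across the interface between the spheres at distance $r-k$ and $r-k+1$ from $v$ the reflected value exceeds the neighbour's value by at most $2(L_k-L_{k-1})=M$; hence $\Phi(g)\in\cG$. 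As the last reflection $R_r$ acts on $B_0(v)=\{v\}$ about $L_r=a+Mr/2$, we get $\Phi(g)(v)\ge a+Mr/2$, so $\Phi$ maps $\cG$ into $\{g\in\cG:g(v)\ge a+\tfrac{Mr}{2}\}\subseteq\cG'$. (The extra reflection $R_0$ about $a$ is what lets us reach $a+Mr/2$ exactly rather than $a+(r-1)M/2$, and also forces $g\ge a$ on $B^o$, which matters below.)

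It remains to bound the multiplicity of $\Phi$. A vertex $u\in B^o$ at distance $j$ from $v$ is touched only by $R_0,\dots,R_{r-j}$, i.e.\ by $r-j+1$ successive reflections about the levels $L_0<L_1<\cdots<L_{r-j}$, equally spaced by $M/2$; tracking the scalar dynamics $x\mapsto\max(x,2L_k-x)$ and inverting it shows that the ``gap from the current reflecting level'' obeys $w_k=|w_{k-1}-M/2|$, and going backwards this recursion is forced once the gap exceeds $M/2$, so the full pre-image of one value $g(u)$ is pinned down by the position of a single ``break-out'' step plus one final sign — at most $O(r-j)$ possibilities. Hence every fibre of $\Phi$ has size at most $\prod_{u\in B^o}O\bigl(r-\dist(u,v)\bigr)$, which combined with $|\cG|\le(2M+1)^m$ gives the desired lower bound \emph{whenever} this product is $\le C^m$. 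I expect the main obstacle to be exactly here: the product is $\le C^m$ only when $B^o$ is ``thick'' (a constant fraction of its vertices sit in the outermost spheres), and degrades to roughly $(r!)^{-1}$ for ``thin'', path-like balls. Such thin balls must be handled by a complementary one-dimensional estimate: after the reflection $R_0$ has made $g\ge a$ on $B^o$, condition $g$ on its values off a geodesic $v=x_0\sim x_1\sim\cdots\sim x_r$ with $x_r\in\Gamma$; then $(g(x_0),\dots,g(x_{r-1}))$ is a one-dimensional $M$-Lipschitz chain pinned at $x_r$ whose increments are essentially i.i.d.\ uniform on $\{-M,\dots,M\}$, and a large-deviation bound gives $\pr[g(x_0)-g(x_r)\ge \tfrac{Mr}{2}]\ge e^{-cr}\ge C^{-m}$ with $c$ absolute (rescaling increments by $1/M$ kills the $M$-dependence) — provided the off-geodesic neighbours do not ``box in'' the chain, which is the point that needs the most care and which ties back to the first regime. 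Finally, the small-$M$ case, where $\lfloor M/2\rfloor$ is too small for the staircase to gain anything, is exactly the bounded-$M$ regime covered by the $\Z$-homomorphism-type estimate of Benjamini--Yadin--Yehudayoff; the genuinely delicate work is stitching the ``thick'' and ``thin'' regimes together while keeping $C$ independent of both $M$ and $r$.
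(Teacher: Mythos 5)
Your reflection-staircase idea is genuinely different from the paper's argument, but as written it is incomplete, and you yourself put your finger on exactly where it breaks. The map $\Phi = R_r\circ\cdots\circ R_0$ does land in $\cG'$, and your per-vertex fibre analysis ($O(r-\dist(u,v))$ preimages, via the ``break-out step'' dynamics $w_{k-1}\in\{w_k+M/2,\,M/2-w_k\}$) is essentially right. The problem is that $\prod_{u\in B^o}O(r-\dist(u,v))$ is $C^{m}$ only when most of $B^o$ sits near the boundary; when $B^o$ is path-like, with one vertex per layer, the product is of order $r!$ while $m\approx r$, so the bound $|\cG'|/|\cG|\ge 1/\max|\Phi^{-1}|$ gives only $(r!)^{-1}$, far below $C^{-r}$. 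Your proposed patch for this ``thin'' regime --- treating a geodesic as a one-dimensional $M$-Lipschitz chain and invoking a large-deviation lower bound --- is not an estimate you have actually established: once you condition on the off-geodesic values, the increments of the chain are neither independent nor uniform (the ``boxing in'' issue you flag), and you would have to control this conditioning in a way that simultaneously interpolates with the thick-ball regime. So there is a real gap, not just a technicality.

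The paper takes a different route that avoids fibre-counting and the thick/thin dichotomy entirely. Rather than push one function up, it \emph{partitions} $\cG$ into classes $\cG(B,A,Q^*)$ indexed by auxiliary subsets $A=A_{\hat g}$ (vertices already ``high'' at their layer) and $Q^*=Q^*_{\hat g}$ (their neighbors that are adjacent to an especially high value), then further into equivalence classes by the values on $A\cup Q^*\cup\Gamma$. Within each equivalence class it freely assigns each remaining vertex $u\in L_i$ any value in $[Mi/2,\,M(i+1)/2-1]$ and verifies, by a short case analysis governed exactly by the definitions of $A$ and $Q^*$, that every such choice stays $M$-Lipschitz and lands in $\cG'$. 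This produces at least $((M-1)/2)^{|B^o|}$ good functions per class against a trivial upper bound of $(2M+1)^{|B^o|}$, giving $C^{-m}$ uniformly, with no dependence on the shape of $B$. In short: your construction is clean and the easy upper bound $|\cG|\le(2M+1)^m$ is the same, but the lower bound on $|\cG'|$ needs to be class-by-class rather than via a single-map multiplicity bound; the $A,Q^*$ bookkeeping is precisely what makes the lower-bound construction go through with arbitrary boundary data.

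Two smaller notes. First, the $M\ge 2$ threshold matters: your staircase gains nothing when $\lfloor M/2\rfloor=0$, and the paper indeed handles $M=1$ separately by a BYY-style pushing argument rather than counting. Second, you should not take for granted that $\dist_{G[B]}(v,w)=\dist_G(v,w)$ for $w\in\Gamma$; this is true because a geodesic in $G$ from $v$ to $w$ stays inside $B_r(v)$, but it deserves a sentence, since your level assignment $L_k$ on the spheres depends on it.
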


\begin{remark}
    If $g_0|_\Gamma$ takes a single value, then the conclusion of \Cref{lower prob} is almost immediate: for simplicity of exposition, assume that $g_0|_\Gamma \equiv 0$ and $M$ is even. Notice that $\cG'$ contains $f$ with $f(v) \in \left[\frac{(i-1)M}{2},\frac{iM}{2}\right]$ for all $v \in L_i$ $(i \ge 1)$, so $|\cG'|\ge (M/2)^{|B_{r-1}(v)|}$; on the other hand, $|\cG|\le (2M+1)^{|B_{r-1}(v)|}$ because
    \beq{because} \text{$f$ is $M$-Lipschitz, so knowing $f(L_i)$ leaves at most $2M+1$ choices for each vertex in $L_{i-1}$.}\enq So
for this special case, we easily have
\[\frac{|\cG'|}{|\cG|} \ge \left(\frac{M}{2(2M+1)}\right)^{|B_{r-1}(v)|} \ge O(1)^{-m}.\]
Our main contribution for the proof of \Cref{MT_M-Lips} is to prove the same lower bound under an arbitrary boundary condition $g_0|_\Gamma$.
\end{remark}

\begin{proof}[Proof of \Cref{MT_M-Lips} assuming \Cref{lower prob}]

Let $r$ be as in \Cref{MT_M-Lips} (so, in particular, $m$ in the statement of \Cref{lower prob} is at most $c\log n$, where we get to choose $c$), and set $k=\lfloor n/m^2\rfloor -1$. By \Cref{many balls}, there is a collection $\{B_1, \ldots, B_k\}$ of pairwise disjoint balls of exact radius $r$ in $G$ such that $v_0 \notin \cup_{i \le k} B_i$. For notational simplicity, let $\hat B=\cup_{i \le k} B_i^o$.

 Define 
    \[
        H = \{h \in \Lip_{v_0}(G[V\setminus \hat B]; M) : \exists f \in \text{Lip}_{v_0}(G ; M) \text{ such that } f|_{G[V \setminus \hat B]} = h\}
    \]
(that is, $H$ is the family of $M$-Lipschitz functions on $G[V \setminus \hat B]$ that can be extended to an $M$-Lipschitz function on $G$). 

Then, for $f \in_R \Lip_{v_0}(G;M)$,
    \beq{ub.precise}\begin{split}
        \pr\left[R(f) < \frac{Mr}{2}\right] &\leq \pr \left[\bigwedge_{i \in [k]} \left\{R(f|_{B_i}) < \frac{Mr}{2}\right\} \right] \\   
        &= \sum_{h \in H} \pr\left[f|_{G[V \setminus \hat B]}=h\right] \pr\left[\bigwedge_{i \in [k]} \left\{R(f|_{B_i}) < \frac{Mr}{2}\right\} \big\rvert \left\{ f|_{G[V \setminus \hat B]} = h\right\}\right]\\
        &\stackrel{(\dagger)}{=} \sum_{h \in H} \pr\left[f|_{G[V \setminus \hat B]}=h\right] \prod_{i=1}^{k} \pr\left[R(f|_{B_i}) < \frac{Mr}{2} \big\rvert \left\{ f|_{G[V \setminus \hat B]} = h\right\}\right]\\
        &\stackrel{\eqref{height.lb}}{\leq} \left(1 - C^{-m}\right)^{k} \le \exp\left(-kC^{-m}\right)=o_n(1). \end{split}\enq
where $(\dagger)$ uses the fact that all $B_i$'s are disjoint so the events $\left\{R(f|_{B_i})<\frac{Mr}{2}\right\}$ are mutually independent; the last equality follows by choosing $c$ (in the statement of \Cref{MT_M-Lips}) small enough.\end{proof}

The rest of this section will be devoted to proving \Cref{lower prob}. We first focus on the case $M \ge 2$, and the (easy) case $M=1$ will be proved at the end of this section. In order to work with an arbitrary boundary condition $g_0|_\Gamma$, we further partition $\cG$ and $\cG'$ using the following definitions. Notice that in \Cref{lower prob}, without loss of generality, we may assume that
\beq{zero}\min(g_0|_\Gamma)=0.\enq
Let a ball $B=B_r(v)$ of exact radius $r$ be given, and consider an $M$-Lipschitz function $\hat g$ on $B$. For $i \ge 0$, let $$A_{\hat g, i} = \left\{u \in L_i: \hat g(u) \geq \frac{M(i+1)}{2}\right\},$$
and set $A_{\hat g}=\cup_{i=0}^r A_{\hat g,i}$. 
Let $Q_{\hat g} = \{u \in B \setminus A_{\hat g}: u \sim A_{\hat g}\}$,
\[Q^*_{\hat g,i}= \left\{u \in Q_{\hat g} \cap L_i: \exists w \in A_{\hat g} \text{ such that } u \sim w \text{ and } \hat g(w)>\frac{M(i+2)}{2}\right\},\]
and $Q^*_{\hat g}=\cup_{i=0}^r Q^*_{\hat g, i}$. Now we partition the collection of $M$-Lipschitz functions on $B$ using the notion introduced above: given a ball $B$ of exact radius $r$ and $A, Q^* \sub B$, let 
\[\cG(B, A, Q^*)=\{f\in \Lip_{v_0}(G[B];M):A_f=A, Q^*_f=Q^*\}.\]
Say $f, f' \in \cG(B,A,Q^*)$ are \textit{equivalent} if $f \equiv f'$ on $A \cup Q^* \cup \Gamma$. Below lemma shows that for each equivalent class, the lower bound in \Cref{lower prob} holds. Recall our assumption \eqref{zero}.

    \begin{lem} \label{new lower bound} Let $M \ge 2$. 
Let $B, A$, and $Q^*$ as above be given.    For any $f \in \cG(B,A,Q^*)$, denote by $\cG_f(B,A,Q^*)$ the collection of functions in $\cG(B,A,Q^*)$ equivalent to $f$. Let
        \[\cG'_f(B,A,Q^*)=\left\{f' \in \cG_f(B,A,Q^*): \max(f'|_B)\ge \frac{Mr}{2}\right\}.\]
        Then, with $k = |A \cup Q^* \cup \Gamma|$,
        \[\frac{|\cG'_f(B,A,Q^*)|}{|\cG_f(B,A,Q^*)|} \ge \left(\frac{M-1}{2(2M+1)}\right)^{|B|-k}.\]
    \end{lem}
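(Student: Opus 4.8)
The plan is to build an explicit injection from $\cG_f(B,A,Q^*)$ into $\cG'_f(B,A,Q^*)$ up to a bounded multiplicity, by pushing the function upward on the set $A \cup Q^* $ in a controlled way while keeping the boundary $\Gamma$ fixed and — crucially — keeping the combinatorial data $A_{f}$ and $Q^*_{f}$ unchanged so that we stay inside the same equivalence class. Concretely, I would start from an arbitrary $f \in \cG_f(B,A,Q^*)$ and note that the equivalence class is parametrized exactly by the values of $f$ on $B \setminus (A \cup Q^* \cup \Gamma)$, which has size $|B|-k$; on each such vertex $u \in L_i$ there are at most $2M+1$ admissible integer values (by the $M$-Lipschitz constraint, as in \eqref{because}, using that the values on an adjacent layer are already fixed or constrained). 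So $|\cG_f(B,A,Q^*)| \le (2M+1)^{|B|-k}$. The real content is the matching lower bound $|\cG'_f(B,A,Q^*)| \ge (M-1)^{|B|-k}$, i.e.\ I must exhibit at least $(M-1)^{|B|-k}$ functions $f'$ in the same equivalence class that in addition satisfy $\max(f'|_B) \ge Mr/2$ while realizing the same $A_{f'}=A$ and $Q^*_{f'}=Q^*$.

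To produce these, I would fix the values on $A \cup Q^* \cup \Gamma$ as prescribed by $f$, and then choose the values on $B \setminus (A\cup Q^* \cup \Gamma)$ layer by layer from the boundary inward (decreasing $i$, i.e.\ from $\Gamma=L_0$ toward the center $L_r=\{v\}$). The idea is that for a vertex $u\in L_i \setminus (A\cup Q^* \cup \Gamma)$, because $u\notin A_f$ we know $f(u) < M(i+1)/2$, and I want to be able to freely choose $f'(u)$ from an interval of length at least $M-1$ lying \emph{below} the threshold $M(i+1)/2$ (so that $u$ stays out of $A$) but still high enough and close enough to the values already chosen on $L_{i+1}$ that (a) all $M$-Lipschitz constraints to adjacent layers are met, and (b) $u$ does not accidentally enter $Q^*$, and no vertex of $A$ adjacent to $u$ fails the $Q^*$-defining inequality. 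The key numerical point is that the "safe window" for $f'(u)$ — roughly the integers in an interval like $[\,M i/2,\ M(i+1)/2 -1\,]$ intersected with the Lipschitz-feasible range coming from the already-assigned neighbor values — always has at least $M-1$ integers in it. Multiplying over the $|B|-k$ free vertices gives the claimed $(M-1)^{|B|-k}$, and one shows that at least one (in fact most) of these choices forces a value $\ge Mr/2$ somewhere: propagating the lower end of the windows inward, the center vertex $v\in L_r$ is forced up to roughly $Mr/2$, or more robustly, along a geodesic from $\Gamma$ to $v$ the value can be made to increase by about $M/2$ per layer.

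The main obstacle I expect is the bookkeeping that guarantees $A_{f'}=A$ and $Q^*_{f'}=Q^*$ \emph{exactly} (not just $\supseteq$ or $\subseteq$) for every function $f'$ we construct — i.e.\ that our upward push never creates new vertices crossing the $A$-threshold $M(i+1)/2$ and never changes which vertices lie in $Q^*$. This is precisely where the definitions of $A$, $Q$, and $Q^*$ were tailored: the threshold for $A_{f,i}$ is $M(i+1)/2$, while the $Q^*$-condition involves the strictly larger threshold $M(i+2)/2$ on the \emph{neighbor} in $A$, leaving a full "buffer" of width $M/2$ between consecutive layers; this buffer is exactly what lets us move the free values around inside windows of length $\ge M-1$ (note $M-1 < M \le 2M+1$, and for $M\ge 2$ we have $M-1\ge 1$) without disturbing the membership data. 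So the proof should reduce to: (i) verify the upper bound $(2M+1)^{|B|-k}$; (ii) for each assignment of free values within the per-vertex windows, check inward-to-outward that $M$-Lipschitz-ness holds and that $A_{f'}=A$, $Q^*_{f'}=Q^*$; (iii) check that these windows each contain $\ge M-1$ integers and that the construction forces $\max(f') \ge Mr/2$; then take the ratio. I would handle the excluded case $M=1$ separately at the end, as the paper indicates.
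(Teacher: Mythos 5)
Your construction is the one the paper uses: fix $f$ on $A \cup Q^* \cup \Gamma$ and let the remaining vertices in $L_i$ range over $\left[\frac{Mi}{2},\ \frac{M(i+1)}{2}-1\right]$, then verify that every such choice is $M$-Lipschitz, preserves $A$ and $Q^*$, and forces a value $\ge \frac{Mr}{2}$ at the center. So the approach is essentially identical. Two small points. First, a numerical slip: the window $\left[\frac{Mi}{2},\ \frac{M(i+1)}{2}-1\right]$ has length $\frac{M}{2}-1$ and contains at least $\frac{M-1}{2}$ integers, not $M-1$ as you claim; this is not a problem because the ratio you need is exactly $\left(\frac{M-1}{2}\right)^{|B|-k}\big/(2M+1)^{|B|-k} = \left(\frac{M-1}{2(2M+1)}\right)^{|B|-k}$, so the weaker count suffices. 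Second, you frame the choices as sequential (layer by layer, intersecting with a ``Lipschitz-feasible range''), but the key observation in the paper is that no such intersection is needed: the per-layer intervals are compatible a priori, so the free values can be chosen independently of one another, and one then verifies the Lipschitz property once for all such $g$ via a short case analysis on whether a neighbor lies in $\Gamma\setminus(A\cup Q^*)$, in $A$, or in $Q^*$ --- precisely the ``buffer'' phenomenon you describe. The verification that $A_{g}=A$ and $Q^*_{g}=Q^*$ is as straightforward as you anticipate: $g$ equals $f$ on $A\cup Q^*\cup\Gamma$, every free vertex stays strictly below its $A$-threshold by construction, and $Q^*$ is determined by $A$ and $g|_A=f|_A$.
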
 

    \begin{proof} Fix an $f \in \cG(B, A, Q^*)$. We have the easy upper bound $|\cG_f(B,A,Q^*)| \le (2M+1)^{|B|-k}$ (see \eqref{because}), so it suffices to show that
    \beq{G'.lb}|\cG'_f(B,A,Q^*)| \ge \left(\frac{M-1}{2}\right)^{|B|-k}.\enq
Our plan is to construct many $M$-Lipschitz functions on $B$ that belong to $\cG'_f(B, A, Q^*)$. To this end, consider $g$ on $B$ that satisfies the following properties:
\[g(u) \begin{cases}
    = f(u) & \text{if } u \in A \cup Q^* \cup \Gamma \\
     \in [\frac{Mi}{2}, \frac{M(i+1)}{2}-1] & \text{if } u \in L_i \setminus (A \cup Q^* \cup \Gamma)
\end{cases}\]

\nin Note that the above construction of $g$ produces $\left(\left\lfloor \frac{M(i+1)}{2}-1\right\rfloor -\left\lceil \frac{Mi}{2}\right\rceil +1 \right)^{|B|-k} \ge \left(\frac{M-1}{2}\right)^{|B|-k}$ distinct functions. Therefore, \eqref{G'.lb} will follow from the claim below:

\begin{claim} Any $g$ defined as above belongs to    $\cG'_f(B,A,Q^*)$.
\end{claim}

\begin{subproof}
First of all, we show that $\max(g|_B) \ge \frac{Mr}{2}$. If the center of the ball $v$ does not belong to $A \cup Q^* \cup \Gamma$, then by the definition of $g$, $g(v) \ge \frac{Mr}{2}$; if $v \in A \cup Q^* \cup \Gamma$, then $v \in A \cup Q^*$, so by the definitions of $A$ and $Q^*$ (and the fact that $f$ is $M$-Lipshitz) we have $g(v)=f(v) \ge \frac{Mr}{2}$.

So we are left with showing that $g$ is $M$-Lipschitz on $B$. We show that $|g(u)-g(w)|\le M$ for $u \sim w$ only for $u \in L_i \setminus (A\cup Q^* \cup \Gamma)$ for $i \in [1, r-1]$ (and the affiliation of $w$ varies), as other cases are immediate or similar. Note that, for such $u$, by the definition of $g$,
\beq{u.bound} g(u) \in \left[\frac{Mi}{2},\frac{M(i+1)}{2}-1\right]. \enq

First, suppose $w \notin A \cup Q^* \cup \Gamma$. Since $w \sim u \in L_i$, we have $w \in \cup_{j=i-1}^{i+1}L_j$, so by the definition of $g$, $g(w) \in \left[\frac{M(i-1)}{2}, \frac{M(i+2)}{2}-1\right]$. Combining this with \eqref{u.bound}, we have $|g(w)-g(u)| \le M$.

Next, if $w \in A \cup Q^* \cup \Gamma$, then there are three possibilities; in all of the case analyses below, we will find upper and lower bounds on $f(w)~(=g(w))$, and then compare them to the bound in \eqref{u.bound}.

\nin     \textbf{Case 1: } $w \in \Gamma \cap A^c \cap (Q^*)^c$. In this case, $u \in L_1 \setminus (A \cup Q^* \cup \Gamma)$ (since $u \sim w$), and so $g(u) \in \left[\frac{M}{2}, M-1\right]$. Since $w \notin A$, $0 \stackrel{\eqref{zero}}{\leq} f(w) < \frac{M}{2} $. Thus, $|g(u)-g(w)|=|g(u)-f(w)|\le M$.

\nin     \textbf{Case 2: } $w \in A$.  We first claim that $f(w) \ge \frac{Mi}{2}$; this easily follows from the facts that $w \in \cup_{j=i-1}^{i+1}L_j$ (again, since $u \sim w$) and the definition of $A$. Next, we claim that $f(w) \le \frac{M(i+2)}{2}$: to see this, since $u \notin Q^*$ (while $u \in L_i$), by the definition of $Q^*$, $u$ cannot be adjacent to any vertex $\tilde w \in A$ with $f(\tilde w) >\frac{M(i+2)}{2}$.

By the two claims above, we conclude that $|g(u)-g(w)|=|g(u)-f(w)|\le M$.

\nin     \textbf{Case 3: } $w \in Q^*$.  We first claim that $f(w) > \frac{M(i-1)}{2}$; indeed, since $w \sim u \in L_i$, we have $w \in \cup_{j=i-1}^{i+1} L_i$. Then by the fact that $w \in Q^*$, there is some vertex $\tilde w \in A$ such that $f(\tilde w)>\frac{M((i-1)+2)}{2}=\frac{M(i+1)}{2}$. Now the claim follows since $f$ is $M$-Lipschitz. Next, an upper bound $f(w) < \frac{M(i+2)}{2}$ easily follows from the fact that $w \notin A$ (and, again, $w \in \cup_{j=i-1}^{i+1} L_i$). 

The combination of the two bounds above and \eqref{u.bound} yields that $|g(u)-g(w)|=|g(u)-f(w)|\le M$.
\end{subproof}

This concludes the proof of \Cref{new lower bound}.    
    \end{proof}

\begin{proof}[Proof of \Cref{lower prob} for $M \ge 2$]
    Let $f \in_R \cG$. Then the left-side of \eqref{height.lb} is equal to (suppressing the dependence on $B$)
    \[\pr[f \in \cG'] = \sum_{A, Q^*}{\sum}^* \pr[f \in \tilde \cG'(A, Q^*)|f \in \tilde \cG(A, Q^*)]\cdot \pr[f \in \tilde \cG(A, Q^*)],\]
    where ${\sum}^*$ ranges over all equivalent classes in $\cG(A,Q^*)$, and $\tilde \cG(A,Q^*)$ denotes an individual equivalent class. (The equality follows from the observation that the collection of equivalent classes $\tilde \cG(A,Q^*)$'s for all possible $A, Q^*$ partitions $\cG$.) By \Cref{new lower bound}, we have
    \[\pr[f \in \tilde \cG'(A, Q^*)|f \in \tilde \cG(A, Q^*)]\ge \left(\frac{M-1}{2(2M+1)}\right)^{|B|-|A \cup Q^* \cup \Gamma|} \ge \left(\frac{M-1}{2(2M+1)}\right)^{m}\]
    for any $\tilde \cG(A,Q^*)$, which yields the conclusion.    
\end{proof}

The construction in the proof of \Cref{new lower bound} requires $M\ge 2$. For $M=1$, we use the following construction; this is essentially the same as the proof of \cite[Lemma 2.4]{benjamini2007random}, but we present it here for ease of reference.

\begin{proof}[Proof of \Cref{lower prob} for $M=1$] Observe that, in this case,
\[|\cG|\le (2M+1)^m=3^m,\]
so it suffices to simply show that $\cG'\ne \emptyset$, that is, there exists a 1-Lipschitz function $g$ such that $g|_\Gamma=g_0|_\Gamma$ and $\max(g) \ge r/2$ (assuming \eqref{zero}). Let $f$ be any 1-Lipschitz function on $G[B]$ such that $f|_\Gamma=g_0|_\Gamma$ (the existence of such an $f$ was assumed in \Cref{lower prob}). Furthermore, we can assume that $f$ is non-negative on $G[B]$, since taking $|f|$ doesn't break the Lipschitz property of $f$ (and the boundary condition remains the same since $f|_\Gamma \ge 0$ by \eqref{zero}).  

We construct a sequence of 1-Lipschitz functions $\{h_i\}_{i \ge 0}$ via the following recursive process starting from $h_0:=f$. At the $i$th step ($i \ge 1$), define $h_i$ as follows: for all $u \in B$,
\[h_i(u) = \begin{cases}
    h_{i-1}(u) & \text{if } u \notin B_{r-i}; \\
     h_{i-1}(u) & \text{if } u \in B_{r-i} \text{ and } h_{i-1}(u) \ne i-1; \\
     i & \text{if } u \in B_{r-i} \text{ and }
 h_{i-1}(u)=i-1 \end{cases}\]
We claim that $g(u):=h_r(u)$ is 1-Lipschitz which will finish the proof (since $h_r|_\Gamma=g_0|_\Gamma$, and $h_r$ takes the value $r$ at the center of $B$). It suffices to show that, assuming that $h_{i-1}$ is 1-Lipschitz, $h_i$ is 1-Lipschitz. 

To this end, crucially observe that (by the construction), for each $i$,
\beq{crucial}h_i(u) \ge j \text{ if } u \in B_{r-j} \text{ and } j\le i.\enq
Now, suppose $u \sim w$. The only case needed to check is when $h_i(u) \ne h_{i-1}(u)$ and $h_i(w)=h_{i-1}(w)$. Suppose $u \in L_\ell$ for some $\ell$. Since $h_i(u) \ne h_{i-1}(u)$, we have $\ell \ge i$ (otherwise $h_i(u)$ wouldn't have changed). Then $w \in \cup_{j=\ell-1}^{\ell+1} L_j$, so in particular, $h_{i-1}(w) \ge i-1$ by \eqref{crucial}. Also, since $u \sim w$ and $h_{i-1}$ is 1-Lipschitz, $h_{i-1}(w)\le h_{i-1}(u)+1=i$. Therefore, $|h_i(u)-h_i(w)|=|i-h_{i-1}(w)|\le 1$.    
\end{proof}

\begin{remark}\label{rmk}
In fact, the above proof gives (a version of) \Cref{lower prob} for any constant $M$, but it doesn't seem to produce $(\gO(M))^m$ many $M$-Lipschitz functions (for large $M$), which is what we would need to extend the above proof to unrestricted $M$.
\end{remark}

\section{Proof of \Cref{MT_Lips}}\label{sec.Lips}

In this section, we derive \Cref{MT_Lips} from \Cref{MT_M-Lips}. The proof is based on the rather obvious fact that the random $M$-Lipschitz model, upon an appropriate scaling, converges to the random $\mathbb R$-Lipschitz model as $M \ra \infty$. More precisely, let $f$ be a uniformly chosen random element of $\Lip_{v_0}(G; \infty)$ and, for a positive integer $M$, $f_M$ be a uniformly chosen random element of $\Lip_{v_0}(G;M)$. A brief proof of the proposition below was given by Peled, Samotij, and Yehudayoff \cite[page 8]{peled2013grounded}.

\begin{prop} \label{convergence}
    $f_M/M$ converges to $f$ in distribution as $M \ra \infty$.
\end{prop}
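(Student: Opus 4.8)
The plan is to show that the probability mass function of $f_M/M$ converges pointwise to that of $f$, which suffices for convergence in distribution since the state space is effectively finite. The key observation is that $\Lip_{v_0}(G;\infty)$ is a convex polytope $P \subseteq \mathbb R^{V \setminus \{v_0\}}$ (cut out by the finitely many inequalities $|f(u)-f(v)| \le 1$), and the uniform measure on $\Lip_{v_0}(G;\infty)$ is exactly normalized Lebesgue measure on $P$. On the other hand, $\Lip_{v_0}(G;M)$ is precisely the set of integer lattice points in $MP$ (the dilate of $P$ by the factor $M$): indeed $g \in \Lip_{v_0}(G;M)$ iff $g$ is integer-valued, $g(v_0)=0$, and $|g(u)/M - g(v)/M| \le 1$ for all edges, i.e.\ iff $g/M \in P$. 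So $f_M/M$ is a uniform random point of $\frac1M(\Z^{V\setminus\{v_0\}}) \cap P$, and $|\Lip_{v_0}(G;M)| = |\Z^{V\setminus\{v_0\}} \cap MP|$.

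Next I would invoke the standard fact that for a bounded convex body $P$ with nonempty interior, the number of lattice points in the dilate $MP$ satisfies $|\Z^D \cap MP| = \mathrm{vol}(P)\, M^D (1 + o_M(1))$ as $M \to \infty$, where $D = |V| - 1$; more relevantly, for any subregion obtained by intersecting $P$ with finitely many half-spaces (in particular for any "box" event $\{R(f) \in [a,b]\}$, which is a polytope), the same asymptotics hold with $\mathrm{vol}(P)$ replaced by the volume of that sub-polytope. Taking the ratio, for any fixed measurable-boundary-nice event $E$ (a finite union of polytopes), $\pr[f_M/M \in E] = \frac{|\Z^D \cap M(P \cap E)|}{|\Z^D \cap MP|} \to \frac{\mathrm{vol}(P \cap E)}{\mathrm{vol}(P)} = \pr[f \in E]$. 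Applying this to $E = \{R \le t\}$ for each continuity point $t$ of the distribution of $R(f)$ gives convergence of the CDFs of $R(f_M/M)$, hence convergence in distribution; and in fact one gets convergence of $f_M/M$ to $f$ as random vectors in $\mathbb R^D$ by the same argument applied to boxes.

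The main technical point — and the only place one must be slightly careful — is that the lattice-point-counting asymptotics $|\Z^D \cap MK| = \mathrm{vol}(K) M^D(1+o_M(1))$ require the region $K$ to be reasonably nondegenerate; for a convex body with interior this is classical (e.g.\ cover $MK$ by unit cubes centered at lattice points and compare with volume, controlling the boundary layer, which has volume $O(M^{D-1})$). One subtlety worth noting is that $P$ itself has full dimension $D$: since $G$ is connected and finite, the all-values-in-$(-\tfrac12,\tfrac12)$-type configurations give an open set inside $P$, so $\mathrm{vol}(P) > 0$ and the denominator is genuinely of order $M^D$. The sub-polytopes $P \cap \{R \le t\}$ may be lower-dimensional for small $t$, but then both their volume and their lattice-point count (divided by $M^D$) tend to $0$, so the conclusion still holds. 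Since the excerpt only asks for convergence in distribution and cites \cite{peled2013grounded} for a brief proof, I would keep the writeup short: state the polytope identification, cite the lattice-point asymptotics, and take the ratio.
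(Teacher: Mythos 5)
Your approach is correct. The paper itself does not prove the proposition; it only cites Peled, Samotij, and Yehudayoff (page~8 of \cite{peled2013grounded}) for a ``brief proof,'' so there is no in-paper argument to compare against, but your polytope/lattice-point-counting route is the natural and, as far as one can tell from the citation, the intended one. Concretely: identify $\Lip_{v_0}(G;\infty)$ with a bounded, full-dimensional convex polytope $P \subseteq \mathbb R^{|V|-1}$ (the ``uniform'' law is then normalized Lebesgue measure on $P$), observe that $\Lip_{v_0}(G;M) = \mathbb Z^{|V|-1} \cap MP$, and use the classical asymptotics $|\mathbb Z^D \cap MK| = \mathrm{vol}(K)\,M^D + O(M^{D-1})$ for convex bodies $K$ to conclude $\pr[f_M/M \in E] \to \mathrm{vol}(P\cap E)/\mathrm{vol}(P)$ for any polytope event $E$; testing on orthants gives pointwise convergence of the joint CDFs, which is enough because the limit CDF is continuous (the limit law has a bounded density). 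Your checks on the two potential degeneracies are exactly the right ones: $\mathrm{vol}(P)>0$ because the open box $(-\tfrac12,\tfrac12)^{|V|-1}$ lies in $P$, and lower-dimensional slices contribute only $O(M^{D-1})$ lattice points against a $\Theta(M^D)$ denominator.

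Two small cosmetic points. First, ``the state space is effectively finite'' is not quite the right justification; what you are actually invoking is the Portmanteau/CDF-continuity criterion for convergence in distribution of $\mathbb R^D$-valued random vectors. Second, the proposition is about convergence of the vector $f_M/M$, not of $R(f_M/M)$, so the orthant (or box) argument is the one that proves the stated claim; the $\{R\le t\}$ discussion is a corollary rather than the main step, and you do correctly note this at the end.
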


\begin{proof}[Proof of \Cref{MT_Lips}]
Let $f$ and $f_M$ as above. Since the range $R$ is a continuous function on $\Lip_{v_0}(G;\infty)$, by the Continuous Mapping Theorem and \Cref{convergence},
$$\pr\left(R(f) < \frac{r}{2}\right) = \lim_{M \to \infty} \pr \left( R\left(\frac{f_M}{M}\right) < \frac{r}{2} \right).$$
The right-side of the above is equal to
\[\begin{split}
      \lim_{M \to \infty} \pr \left( \frac{R(f_M)}{M} < \frac{r}{2} \right)
     &= \lim_{M \to \infty} \pr \left( R(f_M) < \frac{Mr}{2} \right) \\
&=o_n(1)
     \end{split}\]
     by \Cref{MT_M-Lips}.
\end{proof}

\section{Proof of \Cref{MT_torus}}\label{sec.torus}

\subsection*{Set-up} In this section, we use $\bff$ for a uniformly random element of $\Lip_{v_0}(C_{n,k};M)$ (to distinguish this from deterministic $f$). Following \cite{benjamini2000random}, we assume $n$ is even.\footnote{The proof for even $n$ in this section may be extended to odd $n$ with a bit of extra argument, but the proof for even $n$ already exhibits most essential ideas, so we omit the proof for odd $n$ to avoid excessive repetition and keep the clarity of the exposition.}

Write $V$ for $V(C_{n,k})$ and $N=|V|=nk$.
Let $L_0$ be the layer that $v_0$ belongs to. For $i \ge 1$, let $L_i$, the $i$-th layer, be the set of vertices $u$ such that $\dist(u,v_0)=i$. Define the order $\prec$ on the set of layers as the following:
\beq{Sh.order} L_{1} \prec L_{0} \prec L_{3} \prec L_{2} \prec L_{5} \prec L_{4} \prec \cdots \enq
That is,  $L_{2i+1} \prec L_{2i} \prec L_{2i+3}$ for $i \ge 0$. For $v \in L_i$ ($i \ge 1$), write $N^+(v)=\{u \sim v:u \in L_{i+1}\}$ and $N^-(v)=\{u \sim v:u \in L_{i-1}\}$. To extend this definition to $i=0$, pick an arbitrary $w \in L_2$ and set $N^+(v)=N^-(w)$ and $N^-(v)=L_1 \setminus N^+(v)$ for all $v \in L_0$. We will write $|v|=i$ if $v \in L_i$. For $x \in V$ and $X \sub V$, we use $f_x$ for the value of $f$ at $x$; $f_X$ for the vector $(f_x)_{x \in X}$; and $\bar f_X := [\min\{f_u:u \in X\}, \max\{f_u:u \in X\}]$.
Below is our key lemma.
\begin{lem} \label{lem.ideal} There is a constant $C$ for which the following holds. For any $\gamma>C$, if $k>\gamma M^2\log(Mn)$, then for $\bff$ chosen uniformly at random from $\Lip_{v_0}(C_{n,k}; M)$, for any $v \in V$,
\beq{nonideal}
\pr(|\bar\bff_{N_v^+}| \ne M+1) = O((\gamma n)^{-1}).
\enq
Furthermore, if $\log M=o_n(\log n)$, then
\beq{nonideal'}
\pr(|\bar\bff_{N_v^+}| \ne M+1) = o_n(n^{-1}).
\enq
\end{lem}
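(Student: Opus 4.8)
I would combine a \emph{local} description of the conditional law of $\bff$ on a single layer with a \emph{global} entropy count over all layers adapting Kahn's argument \cite{kahn2001range}, using Shearer's lemma (\Cref{lem.Sh}) with the order \eqref{Sh.order}.

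\noindent\emph{Local step.} Fix $v$ and put $P:=N_v^+$; this is a single ``original'' layer $P_j:=\{j\}\times[k]$ of $C_{n,k}$ for a suitable $j$, and its neighbourhood in $C_{n,k}$ is exactly $P_{j-1}\cup P_{j+1}$ (indices mod $n$), one of those layers containing $v$. Since the $k$ vertices of $P_j$ are pairwise non-adjacent and all share the neighbourhood $P_{j-1}\cup P_{j+1}$, conditionally on $\bff_{V\setminus P_j}$ the coordinates $(\bff_u)_{u\in P_j}$ are i.i.d.\ uniform on the interval $J=[\beta-M,\alpha+M]$, where $[\alpha,\beta]:=\bar\bff_{P_{j-1}\cup P_{j+1}}$; thus $|J|=2M+1-s_j$ with $s_j:=|\bar\bff_{P_{j-1}\cup P_{j+1}}|-1\le 2M$. (If $v_0\in P_j$ one coordinate is pinned to $0$; this is harmless, and when proving the bound for a given $v$ one may simply take $v_0\notin P_j$.) Reading off the law of the range of $k$ i.i.d.\ uniforms: $|\bar\bff_{P_j}|=M+1$ is impossible when $s_j>M$; when $s_j=M$ (so $|J|=M+1$) it fails only if an endpoint of $J$ is unsampled, of conditional probability $\le 2(1-\tfrac1{M+1})^{k}\le 2e^{-k/(M+1)}$; and when $s_j<M$, a union bound over the $\le M+1$ length-$(M+1)$ sub-windows of $J$ gives $\pr(|\bar\bff_{P_j}|\le M+1\mid\bff_{V\setminus P_j})\le(M+1)(\tfrac{M+1}{M+2})^{k}\le(M+1)e^{-k/(M+2)}$. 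Taking expectations,
\[ \pr\big(|\bar\bff_{N_v^+}|\ne M+1\big)\ \le\ \pr(s_j\ne M)\ +\ 2e^{-k/(M+1)}. \]
Since $k>\gamma M^2\log(Mn)$, the last term is $O((\gamma n)^{-1})$, and it is $o_n(n^{-1})$ when $\log M=o_n(\log n)$. So it remains to show $\pr(s_j\ne M)=O((\gamma n)^{-1})$: the union of the two layers flanking $N_v^+$ should typically occupy an interval of width exactly $M+1$.

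\noindent\emph{Global step.} Write $Z:=|\Lip_{v_0}(C_{n,k};M)|$, so $H(\bff)=\log Z\ge(N-1)\log(M+1)$ (consider the $M$-Lipschitz functions into $[0,M]$). For a matching \emph{upper} bound on $\log Z$ I would apply \Cref{lem.Sh} with the order \eqref{Sh.order}: that order is arranged so that, apart from boundedly many ``boundary'' distance-layers (those near $v_0$ and its antipode, contributing only a lower-order error), each distance-layer is revealed after \emph{both} its neighbours, at which point — by the conditional independence and uniformity established above — its entropy contribution is governed by the length-$(2M+1-s)$ windows rather than by the trivial bound $2M+1$. Because $s\mapsto\log(2M+1-s)$ is maximised, with value $\log(M+1)$, exactly at the ideal value $s=M$, comparing this upper bound with $\log Z\ge(N-1)\log(M+1)$ forces $s_j=M$ for the typical layer; combined with the dual ``odd-given-even'' decomposition $\log Z=H(\bff_{\mathcal O})+\sum_{j\text{ even}}k\,\bE[\log(2M+1-s_j)]$ and its mirror image — which rule out layers that are too \emph{narrow} — this yields that $\sum_{j=1}^{n}\pr(s_j\ne M)$ is small. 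Since the model is essentially invariant under rotating the layers (the only asymmetry, $\bff(v_0)=0$, can be placed far from $v$), it follows that $\pr(s_j\ne M)=O((\gamma n)^{-1})$ for each layer, and $o_n(n^{-1})$ under the extra hypothesis; with the local step this gives \eqref{nonideal} and \eqref{nonideal'}. Quantitatively the bookkeeping is of Chernoff type for empirical quantities on a scale of $\Theta(M)$ values: distinguishing ``width $M+1$'' from ``width $M+2$'' is a relative deviation of order $1/M$, of probability $e^{-\Omega(k/M^2)}$, so beating a union bound over the $\Theta(n)$ layers needs exactly $k=\Omega(M^2\log n)$, with the free parameter $\gamma$ (and $\log M$) upgrading this to the stated bounds.

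\noindent\emph{Main obstacle.} The crux is the global step for \emph{unbounded} $M$. The range $R(\bff)$ is not a bounded ``number of colours'' — it can be of order $Mn$ — so the standard multicolour entropy machinery (Engbers--Galvin \cite{engbers2012h}) does not apply, and the parts of $H(\bff)$ that escape being conditioned on both neighbours in the Shearer expansion (the boundary layers, equivalently $H(\bff_{\mathcal O})$ or the mutual information $I(\bff_{\mathcal E};\bff_{\mathcal O})$) must be shown to cost essentially $\log(M+1)$ per vertex rather than the trivial $\log(2M+1)$ — which is close to re-proving the lemma. Making this accounting work uniformly in $M$, and without the small-diameter structure that Kahn exploits on $Q_d$, is exactly where the order \eqref{Sh.order} and a careful iterated conditioning are needed; the non-rotation-invariance from $\bff(v_0)=0$ is a minor nuisance handled by the choice of $v_0$.
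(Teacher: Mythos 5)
Your local step is a sound (and slightly different) observation: conditionally on everything outside a layer $P_j$, the values on $P_j$ are i.i.d.\ uniform on the window $J=[\beta-M,\alpha+M]$, and the range-$M$ Chernoff-type estimates you derive from this are correct; this cleanly reduces \eqref{nonideal} to bounding $\pr(s_j\ne M)$ for the flanking pair of layers. The global step, however, is exactly where the real work is, and it is essentially absent from your proposal. You write that the contributions escaping the two-sided conditioning (the ``boundary'' terms, or equivalently $H(\bff_{\mathcal O})$, or $I(\bff_{\mathcal E};\bff_{\mathcal O})$) ``must be shown to cost essentially $\log(M+1)$ per vertex rather than the trivial $\log(2M+1)$ --- which is close to re-proving the lemma.'' That is precisely the gap: no mechanism is supplied for it, and the rest of the paragraph (``this yields that $\sum_j\pr(s_j\ne M)$ is small'') assumes the conclusion.

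There is also a concrete error in the heuristic driving your global step. You claim that $s\mapsto\log(2M+1-s)$ ``is maximised, with value $\log(M+1)$, exactly at the ideal value $s=M$.'' It is not: $\log(2M+1-s)$ is strictly decreasing in $s$ and is maximised at $s=0$. The quantity that \emph{is} maximised at the ideal configuration is the \emph{combined} Shearer weight coming from simultaneously covering each even vertex $v$ by singletons and by the two half-neighbourhoods $N_v^\pm$, namely $\log\bigl[(2M+2-|c\cup c'|)\sqrt{|c||c'|}\bigr]$, whose maximum value $\log(M+1)^2$ is attained when $c=c'$ is an interval of length $M+1$, and which drops to at most $\log M(M+2)$ whenever $|c|\ne M+1$ or $|c'|\ne M+1$. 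Without that specific cover (one copy of $N_v^+$, one of $N_v^-$, and $2k$ copies of $\{v\}$ for each even $v$), the ``narrow layers lose entropy'' and ``wide layers lose entropy'' effects do not combine into a single monotone penalty.

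Finally, even with the correct cover, one more idea is needed that your proposal lacks: the leftover Shearer term $H(\bar\bff_{N_v}\mid \bff_x:x\prec N_v)$ is a priori only $O(\log M)$ per even $v$, and with $\sim n/2$ such terms this alone is not negligible against the $\Theta(\eps N/M^2)$ slack that the ideal-vs-nonideal comparison gives. The paper resolves this with a two-pass bootstrap: first, inserting the crude $O(\log M)$ bound already forces $\eps=O(M^2\log M/k)$, which is small; second, using that $\eps$ is small, one pairs each even $v$ at depth $|v|\ge 4$ with a vertex $w$ four layers closer to $v_0$ and shows $H(\bar\bff_{N_v}\mid\bff_x:x\prec N_v)\le H(\mathbf 1_{Q_{v,w}}\mid\mathbf 1_{Q_w})+H(\bar\bff_{N_v}\mid\mathbf 1_{Q_w},\mathbf 1_{Q_{v,w}},\bff_{N_w})=O(\eps\log(1/\eps)+\eps\log M)$, i.e.\ the leftover term is itself $O(\eps)$ up to logs, so plugging back into the Shearer inequality yields $\eps nk/M^2=O(\log M+\eps n\log(1/\eps))$ and hence \eqref{nonideal}. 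Your proposal has neither the bootstrap nor the pairing device $Q_{v,w}$, and these are the ingredients that make the bound quantitative and uniform in $M$ (circumventing the bounded-colour assumption of the Engbers--Galvin framework, exactly the obstacle you correctly identify).
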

\nin Note that by symmetry of $C_{n,k}$, the above probability is equal for any vertices, and also for the event $\{|\bar\bff_{N_v^-}|\ne M+1\}.$

\begin{proof}[Derivation of \Cref{MT_torus} from \Cref{lem.ideal}]
Say an edge $\{u,v\} \in E(C_{n,k})$ is \textit{ideal} with respect to $f$ if $\bar f_{N_u}=\bar f_{N_v}=\{k, k+1, \ldots, k+M\}$ for some integer $k$. 
Observe that, for any edge $\{u,v\}$, $\{u,v\}$ must be ideal with respect to $f$ if $|\bar f_{N_v^+}|=|\bar f_{N_v^-}|=|\bar f_{N_u^+}|=|\bar f_{N_u^-}|=M+1$. Therefore,
\beq{prob.ideal} \pr(\text{$\{u,v\}$ is ideal with respect to $\bff$}) \ge 1- 4\pr(|\bar\bff_{N_v^+}|\ne M+1).\enq
 Pick $\cC=(v_0, v_1, \ldots, v_{n-1}, v_0)$ a cycle of length $n$, with $v_1 \in N^+(v_0)$ and $v_{n-1} \in N^-(v_0)$, that traverses all the layers. Say $\cC$ is \textit{ideal} with respect to $f$ if all of its edges are ideal with respect to $f$, noting that if $\cC$ is ideal then there is an integer $k$ such that
\[\text{$f(v) \in \{k, k+1, \ldots, k+M\}$ for all $v \in V(C_{n,k})$.}\]
Therefore, for $\bff \in_R \Lip_{v_0}(C_{n,k};M)$,
\[\pr(R(\bff)\le M+1) \ge \pr(\text{$\cC$ is ideal with respect to $\bff$}) \stackrel{\eqref{prob.ideal}}{\ge} 1-4n\cdot \pr(|\bar\bff_{N_v^+}| \ne M+1) \stackrel{\eqref{nonideal}}{=} 1-O(1/\gamma).\]
Furthermore, if $\log n \gg \log M$, then the above is $1-o_n(1)$.

This completes the proof.
\end{proof}

\begin{proof}[Proof of \Cref{lem.ideal}] Recall that $n$ is even, so $C_{n,k}$ is bipartite. Set
\beq{def.eps} \eps=\pr(|\bar\bff_{N_v^+}|\ne M+1) ~ \left(=\pr(|\bar\bff_{N_v^-}|\ne M+1)\right).\enq
 We call $v$ is \textit{even} (\textit{odd}, resp.) if $|v|$ is even (odd, resp.). We will use $\bar f_{N_v}=(c,c')$ as an abbreviation for $(\bar f_{N^+_v}, \bar f_{N^-_v})=(c,c')$. 

Observe that $|\Lip_{v_0}(C_{n,k};M)|\ge (M+1)^{N-1},$ so by \Cref{prop.ent}(a) we have a trivial lower bound
\beq{ent.lb} (N-1)\log(M+1)\le H(\bff).\enq
Now we give an upper bound on $H(\bff)$ using \Cref{lem.Sh}: since one copy of $N^+_v$, one copy of $N^-_v$, and $(2k)$-copies of $\{v\}$ for all even $v \in V$ form a $(2k)$-fold cover of $V$,
\beq{ent1} H(\bff) \le \sum_{\text{$v$ even, $v \ne v_0$}} H(\bff_v|\bff_x:x \prec v)+\frac{1}{2k}\sum_{\text{$v$ even}} H(\bff_{N_v}|\bff_x:x \prec N_v),\enq
where $x \prec y$ means the layer that $x$ belongs to comes before the layer that $y$ belongs to in the order in \eqref{Sh.order}; $x \prec A$ means $x \prec y$ for all $y \in A$.

Each summand in the first term on the right-hand side of \eqref{ent1} is at most (using \Cref{prop.ent}(d) and the definition of conditional entropy)
\[\begin{split} H(\bff_v|\bff_{N_v})\le  H(\bff_v|\bar\bff_{N_v}) & = \sum_{c, c'} H(\bff_v|\bar\bff_{N_v}=(c,c'))\cdot\pr(\bar \bff_{N_v}=(c, c'))\\
&\le \sum_{c,c'} \log(2M+2-|c\cup c'|)\cdot \pr(\bar\bff_{N_v}=(c,c')),\end{split}\]
where the last inequality uses the fact that knowing $\bar f_{N_v}=(c,c')$ leaves at most $2M+2-|c\cup c'|$ choices for $f_v$ (and \Cref{prop.ent}(a)).

To bound the second term on the right-side of \eqref{ent1}, observe that, using \Cref{prop.ent} and the definition of conditional entropy,
\beq{ent2.1}\begin{split}H(\bff_{N_v}|\bff_x:x \prec N_v) &=H(\bff_{N_v}, \bar \bff_{N_v}|\bff_x:x \prec N_v)\\
& \le H(\bar\bff_{N_v}|\bff_x:x \prec N_v)+H(\bff_{N_v}|\bar\bff_{N_v}, (\bff_x:x \prec N_v))\\
&\le H(\bar\bff_{N_v}|\bff_x:x \prec N_v)+ H(\bff_{N_v}|\bar\bff_{N_v})\\
&= H(\bar\bff_{N_v}|\bff_x:x \prec N_v)+\sum_{c,c'} k(\log|c|+\log|c'|)\cdot \pr(\bar\bff_{N_v}=(c,c')).\end{split}\enq
We further upper bound the first term of the right-side of \eqref{ent2.1} for $|v|\le 2$ by
\[H(\bar\bff_{N_v}|\bff_x:x \prec N_v)=O(\log M),\]
because the number of choices for each of $\max\bar\bff_{N_v}$ and $\min\bar\bff_{N_v}$ is at most $O(M)$ for such $v$.

In sum, the right-side of \eqref{ent1} is bounded above by
\beq{ent2} \begin{split} O\left({\log M}\right)&+ \frac{1}{2k}\sum_{|v|\ge 4, \text{ $v$ even}} H(\bar\bff_{N_v}|\bff_x:x\prec N_v)\\
&+\sum_{\text{$v$ even}} \sum_{c,c'} \log\left[(2M-|c\cup c'|+2)(|c||c'|)^{1/2}\right]\cdot \pr(\bar\bff_{N_v}=(c,c')).\end{split}\enq
We provide an upper bound on the last term of \eqref{ent2} as follows: for any pair $(c,c')$, we have
\[\log\left[(2M-|c\cup c'|+2)(|c||c'|)^{1/2}\right]\le \log(M+1)^2\]
(the maximum is achieved when $c=c'=[k, M+k]$ for some $k$);
on the other hand, if $|c| \ne M+1$ or $|c'| \ne M+1$, then
\[\log\left[(2M-|c\cup c'|+2)(|c||c'|)^{1/2}\right]\le\log M(M+2)\]
(the worst case is either $c=c'=[k,(M-1)+k]$ or $c=c'=[k,(M+1)+k]$ for some $k$).
Therefore, recalling the definition in \eqref{def.eps},
\[\sum_{c,c'} \log\left[(2M-|c\cup c'|+2)(|c||c'|)^{1/2}\right]\cdot \pr(\bar\bff_{N_v}=(c,c'))\le (1-\eps)\log(M+1)^2+\eps\log M(M+2).\]
By plugging the above into \eqref{ent2} and combining with the lower bound in \eqref{ent.lb}, we have
\beq{ent3}\begin{split} (N-1)\log(M+1) \le &~ O(\log M) +\frac{1}{2k}\sum_{|v|\ge 4, \text{$v$ even}} (\bar\bff_{N_v}|\bff_x:x\prec N_v)\\
& +\frac{N}{2}\left[(1-\eps)\log(M+1)^2+\eps\log M(M+2)\right].\end{split}\enq

We first loosely show that $\eps$ is quite small, and then give a tighter upper bound on $\eps$ using the fact that $\eps$ is small. To this end, observe that for $v$ with $|v|\ge 4$,
\beq{nbd.ent} H(\bar\bff_{N_v}|\bff_x:x \prec N_v) \le H(\bar\bff_{N_v}| \bar\bff_{L_{|v|-4}}) =\sum_c H(\bar\bff_{N_v}|\bar\bff_{L_{|v|-4}}=c)\pr(\bar\bff_{L_{|v|-4}}=c) =O(\log M)\enq
(the final equality is again because the number of choices for each of $\max\bar\bff_{N_v}$ and $\min\bar\bff_{N_v}$ is at most $O(M)$ given $\{\bar\bff_{L_{|v|-3}}=c\}$). Combining this with \eqref{ent3}, we have
\[(N-1)\log(M+1)\le O(\log M) + O(n\log M)+\frac{N}{2}\left[(1-\eps)\log(M+1)^2+\eps\log M(M+2)\right],\]
and by simplifying the above, we have
\[\eps = O\left(\frac{M^2\log M}{k}\right),\]
which is small by the assumption that $k>\gamma M^2\log(Mn)$ (and $\gamma >C$ where can choose a large enough $C$).

In order to tighten our analysis, we introduce some more events. First, for $u \in V$, define
\[\text{$Q_u$ to be the event that $\{\text{$|\bar \bff_{N^+_u}|=M+1$ and $|\bar \bff_{N^-_u}|=M+1$}\}$,}\]
noting that 
\beq{neg.Qu.small} \pr(\neg Q_u)\le 2\eps \text{ for any $u$}.\enq
Next, for $u, u' \in V$ with $\dist(u, u')=4$, let 
\[\text{$Q_{u, u'}$ be the event that $\{\bar\bff_{N_u}=\bar\bff_{N_{u'}}\}$.}\]
Finally, for each even $v$ with $|v| \ge 4$, pair it with a vertex $w=w(v)$ with $|w|=|v|-4$ and $\dist(v,w)=4$. Observe that, using \Cref{prop.ent},
\beq{ent4} \begin{split}H(\bar\bff_{N_v}|\bff_x:x \prec N_v) \le H(\bar\bff_{N_v}|\bff_{N_w}) &\le H(\bar\bff_{N_v}, \mathbf{1}_{Q_{v,w}}|\bff_{N_w})\\
& \le H(\mathbf{1}_{Q_{v,w}}|\bff_{N_w})+H(\bar\bff_{N_v}|\mathbf{1}_{Q_{v,w}}, \bff_{N_w})\\
&\le H(\mathbf{1}_{Q_{v, w}}|\mathbf{1}_{Q_w})+H(\bar\bff_{N_v}|\mathbf{1}_{Q_w}, \mathbf{1}_{Q_{v,w}}, \bff_{N_w}).\end{split}\enq
We bound each term on the right-side of \eqref{ent4} in the two claims below.

\begin{claim}\label{claim1}
 $H(\mathbf{1}_{Q_{v,w}}|\mathbf{1}_{Q_w})\le 2\eps+H(32\eps)$.
\end{claim}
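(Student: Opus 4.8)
The plan is to bound $H(\mathbf{1}_{Q_{v,w}}\mid \mathbf{1}_{Q_w})$ by splitting on whether $Q_w$ occurs. Write $p=\pr(\neg Q_w)$; by \eqref{neg.Qu.small} we have $p\le 2\eps$. Using the definition of conditional entropy,
\[
H(\mathbf{1}_{Q_{v,w}}\mid \mathbf{1}_{Q_w}) = p\cdot H(\mathbf{1}_{Q_{v,w}}\mid \neg Q_w) + (1-p)\cdot H(\mathbf{1}_{Q_{v,w}}\mid Q_w).
\]
On the event $\neg Q_w$, bound $H(\mathbf{1}_{Q_{v,w}}\mid \neg Q_w)\le 1\le 2$ crudely (an indicator has entropy at most $1$), which contributes at most $2p\le 2\eps$; this is where the additive $2\eps$ in the claim comes from. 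The remaining work is to show that, conditioned on $Q_w$, the indicator $\mathbf{1}_{Q_{v,w}}$ has entropy at most $H(32\eps)$, i.e. that $\pr(\neg Q_{v,w}\mid Q_w)$ is small, since $H(\cdot)$ is increasing on $[0,1/2]$ and $H(q)=H(1-q)$.

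So the core estimate is: $\pr(\bar\bff_{N_v}\ne \bar\bff_{N_w}\mid Q_w)\le 32\eps$ (and then invoke monotonicity of $H$, together with $H(q)\le H(1/2)$ if $q$ happens to exceed $1/2$, though $32\eps$ will be well below $1/2$). To see this, condition further on $Q_w$ holding \emph{and} on the actual values $\bar\bff_{N_w}=(c,c')$ with $|c|=|c'|=M+1$; since $N^+_w$ and $N^-_w$ each span a full window of $M+1$ consecutive integers, and $w\sim N^+_w\cup N^-_w$, these two windows must be $[a,a+M]$ and $[a',a'+M]$ with $|a-a'|\le M$ — in fact the intersection forces $\bff_w$ and then every vertex of $L_{|w|-1}$ and $L_{|w|+1}$ nearby to be heavily constrained. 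The key point is that if $N_w$ is ``ideal'' (spans a single length-$(M+1)$ window), then by the Lipschitz condition the entire neighborhood, and in particular $N_v$ which sits four layers away along a path through $w$, is pinned to lie inside an interval of length at most... — more carefully, the event $Q_{v,w}$ fails only if somewhere between $N_w$ and $N_v$ a layer fails to be ideal, and one can charge $\neg Q_{v,w}$ to the union of the bad events $\neg Q_u$ over the $O(1)$ vertices $u$ on the connecting structure, each of probability $\le 2\eps$ by \eqref{neg.Qu.small}; a union bound over a bounded number ($\le 16$, say) of such vertices gives $\pr(\neg Q_{v,w})\le 32\eps$, and $\pr(\neg Q_{v,w}\mid Q_w)\le \pr(\neg Q_{v,w})/\pr(Q_w)\le 32\eps/(1-2\eps)$, which is $\le 32\eps$ after absorbing constants (or one simply takes the constant $32$ generously). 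Hence $H(\mathbf{1}_{Q_{v,w}}\mid Q_w)\le H(32\eps)$, and adding the $2\eps$ from the $\neg Q_w$ branch finishes the claim.

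The step I expect to be the main obstacle is making precise the deterministic ``propagation'' statement: that if the neighborhoods of a vertex span a single length-$(M+1)$ interval, then so do the neighborhoods of vertices a fixed distance away, \emph{unless} some intermediate vertex's neighborhood fails to be ideal. In $C_{n,k}$ a layer $L_i$ is large (size $k$) and $N^+_v,N^-_v$ are whole layers or halves of layers, so one must track how an interval constraint on $\bar\bff_{L_{i-1}}\cup\bar\bff_{L_{i+1}}$ forces $\bar\bff_{L_i}$, then $\bar\bff_{L_{i\pm 1}}$, etc.; the bookkeeping of which $O(1)$ vertices to union-bound over (to get the constant $32$) is the fiddly part, but it is purely combinatorial and uses only the Lipschitz property plus \eqref{neg.Qu.small}. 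Everything else — the split on $Q_w$, the entropy-of-an-indicator bound, and monotonicity of $H$ — is routine.
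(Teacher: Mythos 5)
Your proposal follows the paper's proof step for step: decompose $H(\mathbf{1}_{Q_{v,w}}\mid\mathbf{1}_{Q_w})$ by conditioning on $\mathbf{1}_{Q_w}$, bound the $\neg Q_w$ branch by $\pr(\neg Q_w)\le 2\eps$ using that an indicator has entropy at most $1$, reduce the $Q_w$ branch to $H\big(\pr(\neg Q_{v,w}\mid Q_w)\big)$, and estimate $\pr(\neg Q_{v,w})$ via a union bound of local ideal-ness failures along the length-$4$ path from $w$ to $v$, together with $\pr(Q_w)\ge 1/2$. The only quibble is the bookkeeping you flagged as fiddly: the paper charges $\neg Q_{v,w}$ to the four \emph{edges} of the path, each non-ideal with probability at most $4\eps$ by \eqref{prob.ideal}, giving $\pr(\neg Q_{v,w})\le 16\eps$ and hence $\pr(\neg Q_{v,w}\mid Q_w)\le 2\cdot 16\eps=32\eps$ exactly; your count (sixteen vertices at $2\eps$ each, then dividing by $1-2\eps$) overshoots and does not quite land on the stated constant, and the throwaway ``$\le 1\le 2$'' bound on the indicator entropy would likewise give $4\eps$ rather than $2\eps$ if taken literally. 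Both are cosmetic and fixable, but as written the constants do not match the claim.
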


\begin{subproof} Note that
\[H(\mathbf{1}_{Q_{v,w}}|\mathbf{1}_{Q_w})=H(\mathbf{1}_{Q_{v,w}}|Q_w)\pr(Q_w)+H(\mathbf{1}_{Q_{v,w}}|\neg Q_w)\pr(\neg Q_w).\]
The second term of the above is 
\[H(\mathbf{1}_{Q_{v,w}}|\neg Q_w)\pr(\neg Q_w) \le \pr(\neg Q_w)\stackrel{\eqref{neg.Qu.small}}{\le} 2\eps.\]
The first term is
\beq{neg.vw} H(\mathbf{1}_{Q_{v,w}}|Q_w)\pr(Q_w)\le H(\mathbf{1}_{Q_{v,w}}|Q_w) =H(\pr(\neg Q_{v,w}|Q_w)), \enq
and we claim that $\pr(\neg Q_{v,w}|Q_w) \le 32\eps$, which will conclude the proof (combined with the fact that $\eps$ is small, so $32\eps \in [0,1]$). To this end, first notice that $\pr(\neg Q_{v,w}|Q_w)\le 2\pr(\neg Q_{v,w})$ (since $\pr(Q_w)\ge 1-2\eps \ge 1/2)$. In order to bound $\pr(\neg Q_{v,w})$, consider a path $w=w_0-w_1-w_2-w_3-w_4=v$ of length 4 that connects $w$ and $v$. Note that the event $\neg Q_{v,w}$ implies that at least one of the edges $\{w_{i-1}, w_i\}$ $(i \in [4])$ is not ideal (recall that an edge $\{u,v\}$ is ideal with respect to $f$ if $\bar f_{N_u}=\bar f_{N_v}=\{k, k+1, \ldots, k+M\}$ for some integer $k$), whose probability is at most $4\cdot4\eps=16\eps$ (see \eqref{prob.ideal}). Therefore, 
\beq{neg.Qvw.small} \pr(\neg Q_{v,w}) \le 16\eps,\enq
and \eqref{neg.vw} is at most $H(32\eps)$.
\end{subproof}

\begin{claim}\label{claim2}
$H(\bar\bff_{N_v}|\mathbf{1}_{Q_w}, \mathbf{1}_{Q_{v,w}}, \bff_{N_w}) =O(\eps \log M)$.
\end{claim}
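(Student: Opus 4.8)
The plan is to split the conditional entropy according to whether the event $Q_{v,w}$ occurs, the point being that on $Q_{v,w}$ the random variable $\bar\bff_{N_v}$ is \emph{determined} by the conditioning data. Since $Q_w$ depends only on $\bar\bff_{N^+_w}$ and $\bar\bff_{N^-_w}$, the indicator $\mathbf 1_{Q_w}$ is a function of $\bff_{N_w}$, so by \Cref{prop.ent}(d) we may discard it: $H(\bar\bff_{N_v}\mid\mathbf 1_{Q_w},\mathbf 1_{Q_{v,w}},\bff_{N_w})\le H(\bar\bff_{N_v}\mid\mathbf 1_{Q_{v,w}},\bff_{N_w})$, and it suffices to bound the latter.

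By the definition of conditional entropy,
\[
H(\bar\bff_{N_v}\mid\mathbf 1_{Q_{v,w}},\bff_{N_w})=\sum_{\iota\in\{0,1\}}\ \sum_{\vec a}\pr\!\big(\mathbf 1_{Q_{v,w}}=\iota,\ \bff_{N_w}=\vec a\big)\,H\!\big(\bar\bff_{N_v}\ \big|\ \mathbf 1_{Q_{v,w}}=\iota,\ \bff_{N_w}=\vec a\big).
\]
Every $\iota=1$ term vanishes: on $Q_{v,w}$ we have $\bar\bff_{N_v}=\bar\bff_{N_w}$ by definition, and $\bar\bff_{N_w}$ is a deterministic function of $\bff_{N_w}$, so the conditional distribution of $\bar\bff_{N_v}$ is a point mass and its entropy is $0$. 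Bounding each $\iota=0$ term by \Cref{prop.ent}(a), we get
\[
H(\bar\bff_{N_v}\mid\mathbf 1_{Q_{v,w}},\bff_{N_w})\ \le\ \pr(\neg Q_{v,w})\cdot\max_{\vec a}\log\big|\{\text{values }\bar\bff_{N_v}\text{ can assume when }\bff_{N_w}=\vec a\}\big|.
\]

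The crux is the crude count $\log\big|\{\text{values of }\bar\bff_{N_v}\text{ given }\bff_{N_w}=\vec a\}\big|=O(\log M)$, which comes from the geometry of $C_{n,k}$. Each of $N^+_v$ and $N^-_v$ lies inside a single ``cycle-layer'' (a set of vertices sharing a common first coordinate); since $\dist(v,w)=4$, one can pick a vertex $u'\in N_w$ whose cycle-layer is at graph distance at most $4$ from the cycle-layers of $N^+_v$ and of $N^-_v$. The $M$-Lipschitz property then confines every value of $\bff$ on $N^+_v\cup N^-_v$ to the interval $[a_{u'}-4M,\,a_{u'}+4M]$ of length $O(M)$ (where $a_{u'}=\bff(u')$ is read off from $\vec a$); hence each of the interval-valued coordinates $\bar\bff_{N^+_v},\bar\bff_{N^-_v}$ of $\bar\bff_{N_v}$ has $O(M^2)$ possible values, $\bar\bff_{N_v}$ has $O(M^4)$, and its logarithm is $O(\log M)$. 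Plugging $\pr(\neg Q_{v,w})\le16\eps$ from \eqref{neg.Qvw.small} into the previous display yields $H(\bar\bff_{N_v}\mid\mathbf 1_{Q_w},\mathbf 1_{Q_{v,w}},\bff_{N_w})=O(\eps\log M)$.

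The only step requiring care is this last geometric bookkeeping: verifying that $N^+_v$, $N^-_v$, and a suitable $u'\in N_w$ sit in cycle-layers pairwise within distance $4$, and handling the boundary cases $|v|=4$ (where $w\in L_0$ and $N^{\pm}_w$ follow the special convention, so one picks $u'$ from the appropriate part of $N_w$) and $|v|$ close to $n/2$ (where $L_{|v|+1}$ may be empty, making $N^+_v=\emptyset$, which only shrinks the count). Everything else is routine manipulation of conditional entropy.
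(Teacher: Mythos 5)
Your proof is correct and follows essentially the same strategy as the paper's: split on $Q_{v,w}$, observe the entropy vanishes on $Q_{v,w}$ since $\bar\bff_{N_v}=\bar\bff_{N_w}$ is then a function of $\bff_{N_w}$, and bound the complementary contribution by $\pr(\neg Q_{v,w})\cdot O(\log M)=O(\eps\log M)$ via \eqref{neg.Qvw.small} and the crude count that conditioning on $\bff_{N_w}$ leaves only $O(M^4)$ possibilities for $\bar\bff_{N_v}$. The one small streamlining over the paper's write-up is that you drop $\mathbf 1_{Q_w}$ from the conditioning at the outset, noting it is determined by $\bff_{N_w}$, whereas the paper keeps it and handles the $\neg Q_w$ case as a separate (also $O(\eps\log M)$) term; both routes give the same bound.
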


\begin{subproof}
Note that
\[H(\bar\bff_{N_v}|\mathbf{1}_{Q_w}, \mathbf{1}_{Q_{v,w}}, \bff_{N_w})=\pr(\neg Q_{w})H(\bar\bff_{N_v}|\neg Q_w, \mathbf{1}_{Q_{v,w}}, \bff_{N_w})+\pr(Q_w)H(\bar\bff_{N_v}| Q_w, \mathbf{1}_{Q_{v,w}}, \bff_{N_w}).\]
The first term of the above is
\[\begin{split}\pr(\neg Q_{w})H(\bar\bff_{N_v}|\neg Q_w, \mathbf{1}_{Q_{v,w}}, \bff_{N_w})&\stackrel{\eqref{neg.Qu.small}}{\le} 2\eps \cdot H(\bar\bff_{N_v}|\neg Q_w, \mathbf{1}_{Q_{v,w}}, \bff_{N_w})\\
&=O(\eps \log M),\end{split}\]
where the last equality is because $H(\bar\bff_{N_v}|\neg Q_w, \mathbf{1}_{Q_{v,w}}, \bff_{N_w})\le H(\bar\bff_{N_v}|\bar\bff_{N_w})=O(\log M)$ similarly to the argument in \eqref{nbd.ent}.

The second term is
\[\begin{split}\pr(Q_w)H(\bar\bff_{N_v}| Q_w,& \mathbf{1}_{Q_{v,w}}, \bff_{N_w}) \\
&= \pr(Q_w \wedge Q_{v,w})H(\bar\bff_{N_v}|Q_w \wedge Q_{v,w}, \bff_{N_w})+\pr(Q_w \wedge \neg Q_{v,w})H(\bar\bff_{N_v}|Q_w \wedge \neg Q_{v,w},\bff_{N_w}).\end{split}\]
Note that the first term of the above is 0, since
\[\begin{split}H(\bar\bff_{N_v}|Q_w \wedge Q_{v,w}, \bff_{N_w})&=\sum_c \pr(\bff_{N_w}=c~|Q_w \wedge Q_{v,w})H(\bar\bff_{N_v}|Q_w \wedge Q_{v,w}, \bff_{N_w}=c)\\
&=\sum_c\pr(\bff_{N_w}=c~|Q_w \wedge Q_{v,w})\cdot 0=0.\end{split}\]
The next term is
\[\begin{split}\pr(Q_w \wedge \neg Q_{v,w})H(\bar\bff_{N_v}|Q_w \wedge \neg Q_{v,w},\bff_{N_w}) &\le \pr(\neg Q_{v,w})H(\bar\bff_{N_v}|Q_w \wedge \neg Q_{v,w},\bff_{N_w})\\
& \stackrel{\eqref{neg.Qvw.small}}{\le} 16\eps\cdot H(\bar\bff_{N_v}|Q_w \wedge \neg Q_{v,w},\bff_{N_w})\\
&=O(\eps \log M),\end{split}\]
where the last equality, again, follows from  $H(\bar \bff_{N_v}|Q_w \wedge \neg Q_{v,w}, \bff_{N_w}) \le H(\bar\bff_{N_v}|\bar\bff_{N_w})=O(\log M)$ similarly to the argument in \eqref{nbd.ent}.
\end{subproof}

By combining \Cref{claim1} and \Cref{claim2}, and using the fact that $H(O(\eps))=O(\eps\log(1/\eps))$ for small $\eps$, we may bound the left-side of \eqref{ent4} by
\[H(\bar\bff_{N_v}|\bff(x):x \prec N_v)= O(\eps\log (1/\eps)+\eps \log M).\]
Plugging this back into \eqref{ent3} and simplifying, we obtain that
\[\frac{\eps N}{M^2}=O(\log M+\eps n \log (1/\eps)+\eps n\log M).\]
Since we are assuming $k>\gamma M^2\log(Mn)$ (and $N=nk$), the $O(\eps n\log M)$ term on the right-side is absorbed by the left-side, so the above can be rewritten as
\beq{eps.final}\frac{\eps nk}{M^2}=O(\log M+\eps n \log (1/\eps)).\enq
We argue that $\eps n \log(1/\eps)$ cannot be significantly larger than $\log M$; more precisely, we must have 
\beq{musthave} O(\log M+\eps n \log(1/\eps))=O(\log M).\enq
To see this, assume otherwise for the sake of contradiction. Then, \eqref{eps.final} implies that $k/M^2=O(\log(1/\eps)),$ which in turn implies that (again using $k>\gamma M^2\log(Mn)$) $\eps<(Mn)^{-\gamma'}$ for some large constant $\gamma'$. Thus we have $\eps n\log(1/\eps) \ll \log M$, which is a contradiction.

Now, combining \eqref{eps.final} and \eqref{musthave}, we conclude that
\[\eps=O\left(\frac{\log M}{\gamma n\log(Mn)}\right).\qedhere\]
\end{proof}
    
\section*{Acknowledgement}

This project was conducted as part of the 2024 NYC Discrete Math REU, funded by NSF grant DMS-2349366 and Jane Street. JP is supported by NSF grant DMS-2324978 and a Sloan Fellowship. 

\bibliographystyle{plain}
\bibliography{bibliography}

\end{document}